\DeclarePairedDelimiter{\floor}{\lfloor}{\rfloor}
\DeclarePairedDelimiter\ceil{ \lceil}{ \rceil}
\theoremstyle{plain}
\newtheorem{theorem}{Theorem}[section]
\newtheorem{corollary}[theorem]{Corollary}
\newtheorem{lemma}[theorem]{Lemma}
\newtheorem{proposition}[theorem]{Proposition}
\theoremstyle{definition}
\theoremstyle{remark}
\numberwithin{equation}{section}
\newcommand{\N}{\mathbb{N}}
\newcommand{\R}{\mathbb{R}}
\newcommand{\Z}{\mathbb{Z}}
\newcommand{\p}{\mathbbm{P}}
\newcommand{\cA}{\mathcal{A}}
\newcommand{\cB}{\mathcal{B}}
\newcommand{\cC}{\mathcal{C}}
\newcommand{\cD}{\mathcal{D}}
\newcommand{\cG}{\mathcal{G}}
\newcommand{\cF}{\mathcal{F}}
\newcommand{\cH}{\mathcal{H}}
\newcommand{\cK}{\mathcal{K}}
\newcommand{\cS}{\mathcal{S}}
\newcommand{\cM}{\mathcal{M}}
\newcommand{\cO}{\mathcal{O}}
\newcommand{\fS}{\mathfrak{S}}
\newcommand{\fT}{\mathfrak{T}}
\newcommand{\E}[1]{\mathbb{E}\left [ \, #1 \, \right ]}
\renewcommand{\epsilon}{\varepsilon}
\renewcommand{\phi}{\varphi}
\newcommand{\pspace}{(\Omega,\cA,\p)}
\newcommand{\intd}[1]{\,\mathrm{d}#1}
\newcommand{\norm}[1]{\left\lVert #1 \right\rVert}
\newcommand{\scalar}[2]{\left\langle #1,#2 \right\rangle}
\newcommand{\1}[1]{\,\mathbbm{1}\! \left\{ #1 \right\} }
\newcommand{\hnorm}[1]{ {\left \lVert #1 \right \rVert}_{\cH} }
\newcommand{\Kh}[1]{K_h\left( #1 \right)}
\begin{document}

\title{A Note on Exponential Inequalities in Hilbert Spaces\\
for Spatial Processes with Applications to the\\
Functional Kernel Regression Model
\footnote{This research was supported by the German Research Foundation (DFG), grant number KR 4977/1-1.}
}

\author{
Johannes T. N. Krebs\footnote{Department of Statistics, University of California, Davis, CA, 95616, USA, email: \tt{jkrebs@ucdavis.edu} }\;\footnote{Corresponding author}
}

\date{\today}
\maketitle
\linespread{1.1}
\begin{abstract}
\setlength{\baselineskip}{1.8em}
In this manuscript we present exponential inequalities for spatial lattice processes which take values in a separable Hilbert space and satisfy certain dependence conditions. We consider two types of dependence: spatial data under $\alpha$-mixing conditions and spatial data which satisfies a weak dependence condition introduced by \cite{dedecker2005new}. We demonstrate their usefulness in the functional kernel regression model of \cite{ferraty2004nonparametric} where we study uniform consistency properties of the estimated regression operator on increasing subsets of the underlying function space.\medskip\\
\noindent {\bf Keywords:} Asymptotic inequalities; Functional data; Nonparametric statistics; Spatial Lattice Processes; Strong mixing; Weak dependence measures\\
\noindent {\bf MSC 2010:} Primary:	62G08; 62M40 Secondary:  37A25; 62G20
\end{abstract}

This article studies the nonparametric regression problem for spatial functional data. Pioneering work in functional data analysis has been done by \cite{ramsay1997functional} and \cite{bosq_linear_2000}. The last-named was among the first who considered the linear functional autoregressive model and related estimation techniques. Recently, the analysis of spatial data has gained importance in many applications such as image analysis, geophysics astronomy and environmental science. A systematic introduction to random fields is given in \cite{guyon1995random} or in \cite{cressie1993statistics}. In the same time, technological advances make it possible to sample data at high frequencies such that sample data nowadays can be rather considered as a collection of objects in an infinite-dimensional space, the so-called functional data.

In this article, we address one problem related to functional data, more precisely, the estimation of the regression operator in a nonlinear double functional regression model where both the regressor and the predictor are functional and where the data are generated by a spatial lattice process. We do this in the functional kernel regression model of \cite{ferraty2002functional}, \cite{ferraty_nonparametric_2007} and \cite{ferraty2012regression}.

So far, nonparametric regression for finite-dimensional spatial data has been studied in several variants: \cite{li2016nonparametric} studies a wavelet approach. \cite{krebs2017orthogonal} constructs an orthogonal series estimator for spatial data. In particular, the kernel method has been popular for regression problems which involve spatial data, e.g., see \cite{carbon1996kernel}, \cite{tran1990kernel}, \cite{hallin2004local} and \cite{carbon2007kernel}.

Often the dependence within the spatial data or the time series is assumed to satisfy a strong mixing condition, see \cite{bradley2005basicMixing} for an introduction to mixing conditions. \cite{ferraty2004nonparametric}, \cite{delsol2009advances} study the functional regression model for $\alpha$-mixing time series. We generalize their results to $\alpha$-mixing spatial processes in one part of the manuscript.

Unfortunately, many stochastic processes lack certain smoothness conditions and are thus not $\alpha$-mixing, see e.g. \cite{andrews1984non}. So other dependence concepts have been studied as well: \cite{laib2010nonparametric} consider the functional kernel regression model for stationary ergodic data. \cite{hormann2010weakly} study $L^p$-$m$-approximable functional data. An alternative notion of dependence has been proposed by \cite{dedecker2005new}: their definition of the weak dependence coefficient admits to consider only a finite time interval in the future. We continue with this approach and also study the functional kernel regression model for $\cC$-weakly dependent spatial data, see \cite{maume2006exponential} for a similar application to finite-dimensional time series.

\cite{politis1994limit} develop limit theorems for sums of weakly dependent Hilbert space-valued random variables. We give in this article exponential inequalities for Hilbert space-valued spatial data and continue with the investigations of \cite{ferraty2004nonparametric} and \cite{ferraty2012regression}: we study the uniform $a.s.$-convergence of the kernel regression estimator on increasing subsets of an infinite-dimensional function space.

This paper is organized as follows: we introduce in Section~\ref{Section_DefinitionsNotation} two selected dependence concepts for spatial data. We study exponential inequalities for $\alpha$-mixing Hilbert space-valued spatial processes in Section~\ref{Section_ExponentialInequalities}. Moreover, we give exponential inequalities for $\cC$-weakly dependent Hilbert space-valued spatial processes in Section~\ref{Section_ExpInequalitiesPhiMixing}. In the last Section~\ref{Section_Application}, we apply the inequalities in the functional kernel regression framework of \cite{ferraty2004nonparametric}.

\section{Two dependence concepts for spatial processes}\label{Section_DefinitionsNotation}
Let $\pspace$ be a probability space, $(T,\fT)$ be a measurable space and $N\in\N_+$ be a positive natural number. We consider a generic random field $Z$ which is indexed by $\Z^N$, i.e., a collection of random variables $\{Z_{s}: {s}\in \Z^N\}$ where each $Z_s$ takes values in $T$. 
$Z$ is (strictly) stationary if for each $k\in \N_+$, for all points $s_1,\ldots,s_k \in \Z^N$ and for each translation $w\in \Z^N$, the joint distribution of the translated vector $( Z_{s_1+w},\ldots,Z_{s_k+w} )$ is equal to the joint distribution of $(  Z_{s_1},\ldots,Z_{s_k}  )$.

Denote the Euclidean maximum norm by $\norm{\cdot}_{\max}$ and define for two subsets $I, J\subseteq \Z^N$ their distance by $ d_{\infty}(I,J) = \inf\{ \norm{s-t}_{\max}: s\in I, t\in J \}$. Furthermore, we write ${s}\le {t}$ if and only if $s_i \le t_i$ for each $i=1,\ldots,N$.

Set ${e_N} = (1,\ldots,1)\in \Z^N$. Let ${n}=(n_1,\ldots,n_N)\in \N^N$, then we write $I_{n}$ for the $N$-dimensional cube on the lattice which is spanned by ${e_N}$ and ${n}$, i.e., $I_{n} = \{ {s}\in \Z^N: {e_N}\le {s} \le {n} \}$. Consider a sequence $(n(k): k\in\N)\subseteq \N^N$ such that
$$ \liminf_{k\rightarrow\infty } \; { \min(n_i(k):i=1,\ldots,N) }/{\max(n_i(k):i=1,\ldots,N)} > 0 $$ and $\lim_{k\rightarrow \infty} n_{i,k} = \infty$ for all $i=1,\ldots,N$. We say that such a sequence converges to infinity and write $n\rightarrow \infty$. Moreover, if $(A_{n(k)}:k\in\N)$ is sequence which is indexed by the sequence $(n_k:k\in\N)$, we also write $A_n$ for this sequence. In particular, we characterize limits for real-valued sequences $A_n$ in this notation, i.e., we agree to write $\lim_{n\rightarrow\infty} A_n$ for $\lim_{k\rightarrow \infty} A_{n(k)}$. $\limsup$ and $\liminf$ are to be understood in the analogue way.
Furthermore, we write $\norm{U}_{\p,p}$ for the $p$-norm of a real-valued random variable $U\in\pspace$.

The $\alpha$-mixing coefficient describes the dependence between random variables, it was introduced by \cite{rosenblatt1956central} and is defined for two sub-$\sigma$-algebras $\cF, \cG$ of $\cA$ by
$
		\alpha(\cF,\cG) \coloneqq \sup \left\{  \left| \p(A\cap B)-\p(A)\p(B)		\right|: A\in\cF, B\in\cG \right\}.
$
Denote by $\cF(I) \coloneqq \sigma( Z_{s}: {s}\in I )$ the $\sigma$-algebra generated by the $Z_{s}$ for $s\in I$ where $I\subseteq\Z^N$. The $\alpha$-mixing coefficient of the random field $Z$ is then defined as
\begin{align}\label{StrongSpatialMixing}
	\alpha(k) \coloneqq \sup_{ I, J \subseteq \Z^N,\; d_{\infty}(I,J)\ge k } \alpha( \cF(I),\cF(J)), \quad k\in\N.
\end{align}
The random field $Z$ is said to be strongly (spatial) mixing (or $\alpha$-mixing) if $\alpha(k)\rightarrow 0$ as $k\rightarrow \infty$. 

In general the strong mixing condition can fail even for Markov processes if certain smoothness conditions are not satisfied. For instance, consider the stationary AR(1) process $X_k = 1/2 (X_{k-1} + \epsilon_k)$ where the innovations are Bernoulli distributed. This process fails to be strongly mixing see \cite{andrews1984non}. In particular, $(X_k: k\in \N)$ does not satisfy any mixing condition which is stricter than $\alpha$-mixing.

Thus, beside the $\alpha$-mixing condition, we shall study processes which satisfy a weak dependence criterion, introduced in \cite{ dedecker2005new}.
Consider the class of (nonlinear) operators mapping from a measurable space $(\cS,\fS)$ to the real numbers. Define for such an operator the supremum norm by $\norm{g}_{\infty} = \sup_{x\in\cS} |g(x)|$ and write
\begin{align}\label{DefinitionCC}
	\cC=\{ g: \cS\rightarrow\R, \norm{g}_{\infty}<\infty\}.
	\end{align}
Moreover, let $\norm{\cdot}^{\sim}$ be a pseudo-norm on $\mathcal{C}$ (which is intended to measure the roughness of an element of $\cC$). For example, a possible choice is the pseudo-norm associated with Lipschitz- or the H{\"o}lder-constant of the operator. Another choice could be some measure for the total variation of the operator $g$. Write
$
			\cC_1 \coloneqq \{g\in\cC, \norm{g}^\sim \le 1 \}
$
for the bounded operators which have a pseudo-norm of at most 1. We define the $\phi_{\cC}$-dependence coefficient between a random variable $X$ which takes values in $\cS$ and a sub-$\sigma$-algebra $\cM\subseteq \cA$ by
\begin{align}\label{DefPhiC}
		\phi_{\cC} (\cM,X) \coloneqq \sup\{ \norm{\E{g(X)|\cM} - \E{g(X)} }_{\p,\infty} : g\in \cC_1 \}. 
\end{align}		
It follows from this definition in \eqref{DefPhiC} that 
$$
			\phi_{\cC}(\cM,X) = \sup\left\{ |\operatorname{Cov}(Z,g(X))| : Z \text{ is } \cM-\text{measurable,} \norm{Z}_{\p,1} \le 1 \text{ and } g\in \cC_1 \right\},
$$
see \cite{dedecker2005new} Lemma 4.

In the following, we shall study the stationary spatial process $(X_s,y_s)$ where the $X_s$ take values in the space $\cS$ and the $y_s$ are real-valued and bounded by a constant $B\coloneqq \norm{y_s}_{\p,\infty}<\infty$. In this case, we define the following variant of \eqref{DefPhiC} which corresponds to the approach of \cite{maume2006exponential} for finite-dimensional time series: consider the $\sigma$-algebra $\cM_k \coloneqq \sigma\{ (X_s,y_s): 1\le \norm{s}_{\max} \le k \}$ and define for $i\in\N$
\begin{align}\label{DefPhiCV}
		\phi_{\cC,y_s}(i) \coloneqq \sup\left\{ \norm{ \E{\frac{y_s}{B} g(X_s) \Big| \cM_k} - \E{\frac{y_s}{B} g(X_s)} }_{\p,\infty}, g\in \cC_1, s \in \N^N, \norm{s}_{\max} = k+i \right\}.
\end{align}
We say that the process $\{(X_s,y_s):s\in\Z^N\}$ is $\cC$-weakly dependent if the coefficients $\phi_{\cC,y_s}(i)$ are summable. If we only consider the univariate process $\{X_s:s\in\Z^N\}$, we formally replace the $y_s$ by ones in the above definition and write $\phi_{\cC}$ instead of $\phi_{\cC,1}$. If the coefficients $\phi_{\cC}(i)$ are summable, we say that $\{X_s: s\in\Z^N\}$ is $\cC$-weakly dependent.

Consider a time series $\{X_t:t\in\Z\}$ and a $\sigma$-algebra $\cM_k$ generated by the time series up to some time $k$. Let $i\in\N_+$ and assume that the time series is $\cC$-weakly dependent. Interpreting the definition of $\phi_{\cC}$ from \eqref{DefPhiCV}, we see that $\phi_{\cC}(\cM, X_{t+k})$ considers only a finite time in the future which is one main difference of a $\cC$-weakly dependent process when compared to ($\alpha$-)mixing processes.

\section{Exponential inequalities for \texorpdfstring{$\alpha$}{alpha}-mixing processes on \texorpdfstring{$N$}{N}-dimensional lattices}\label{Section_ExponentialInequalities}

We begin with an exponential inequality for strongly mixing real-valued random fields. The proofs do not only rely on the concept of splitting the index set in big blocks and small blocks, we additionally exploit the idea of \cite{merlevede2009} who give exponential inequalities for $\alpha$-mixing time series. The key idea is that the sum of a discrete time series on $\{1,\ldots,T\}$ can be understood as an integral of a piecewise constant process on the interval $(0,T]$; this interval is then partitioned in Cantor set-like elements. We generalize this concept to a spatial index set $I_n$.

\begin{proposition}\label{BernsteinLatticeImproved}
Let the real-valued random field $Z$ have exponentially decreasing $\alpha$-mixing coefficients, i.e., there are $c_0, c_1 \in \R_+$ such that the coefficient from \eqref{StrongSpatialMixing} satisfies $\alpha(k ) \le c_0 \exp( - c_1 k)$. The $Z_s$ have expectation zero and are bounded by $B$. Let ${n}\in \N^N$ be such that
\begin{align} \min\{ n_i: i=1,\ldots,N \} \ge C' \max\{ n_i: i=1,\ldots,N \} \label{EqRatioIndex}
\end{align}
for a constant $C'> 0$ and $\min\{n_i : i=1,\ldots,N\} \ge 2^{N+1}$. Define
$ \tilde{C} \coloneqq 2^{-N} \wedge c_1C'^{N/(N+1)} 2^{-(N+1)}$.
Moreover, let $\beta>0$ such that
$$
 \beta B \le \left\{ \tilde{C} / |I_{n}|^{N/(N+1)} \vee 1/|I_{n}| \right\} \vee \left\{ \left(	C' \tilde{C}^{(N+1)/N^2} /2^{N+3} \right)^{ N^2/(N+1)} \wedge \frac{c_1 C'}{2^{N+2}} \Big/ |I_{n}|^{(N-1)/N}   \right\}.
$$ 
Then there are constants $A_1, A_2\in \R_+$ which depend on the lattice dimension $N$, the constant $C'$ and the bound on the mixing coefficients but not on $n\in\N^N$ and not on $B$ such that 
\begin{align}\begin{split}\label{BernsteinLatticeImprovedEq0}
		\log \E{ \exp\left\{ \beta \sum_{s \in I_{n}} Z_s  \right\} } &\le 	A_1 (\beta B)^2 |I_{n}| \left(1+|I_{n}|^{(N-1)/N}  \log |I_{n}| \right) + A_1 \beta B |I_{n}| \exp\left( - A_2  (\beta B)^{-1/N} \right) \\
		&\quad +  A_1 (\beta B )^{(N+1)/N} |I_{n}| \exp\left\{ - A_2 (\beta B)^{1-(N+1)/N^2} |I_{n}|^{(N-1)/N} \right\}.
\end{split}\end{align}
\end{proposition}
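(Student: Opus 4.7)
The plan is to adapt the Cantor-set iterated blocking scheme of Merlev\`ede, Peligrad and Rio (2009) from the time-series interval setting to the $N$-dimensional lattice cube $I_n$. Write $S_n = \sum_{s\in I_n} Z_s$ and set $m = \min_i n_i$. I would perform $K \asymp \log m$ iterations of the following step: given a current collection of ``kept'' axis-aligned sub-cubes, cut each one along every axis into $2^N$ congruent children and remove a thin ``buffer'' hyper-strip of width $w_k$ separating them. With a suitable choice $w_k \asymp 2^{-k(N+1)/N}\, m$ the buffers are wide enough that $\alpha(w_k)$ is exponentially small at every scale, while the total volume removed as buffer sums to $O(|I_n|^{(N-1)/N})$. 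The ratio condition \eqref{EqRatioIndex} guarantees that this construction makes geometric sense uniformly in $n$.

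Once this partition is in place, $S_n$ decomposes into a telescoping sum of a buffer contribution $B_k$ at each scale $k = 1,\ldots,K$ plus the sum $C_K$ over the $2^{KN}$ innermost cubes. H\"older's inequality then yields
\begin{equation*}
\log \E{\exp(\beta S_n)} \le \sum_{k=1}^K \tfrac{1}{K+1}\log \E{\exp(\beta (K+1) B_k)} + \tfrac{1}{K+1}\log \E{\exp(\beta (K+1) C_K)}.
\end{equation*}
Within each scale the kept pieces are separated by $w_k$, so a classical covariance bound for bounded $\alpha$-mixing random variables (\`a la Davydov/Bradley) decouples $\E{\exp(\beta(K+1) B_k)}$ into the product of per-cube MGFs up to an additive error of order $\alpha(w_k)\exp(\beta B\cdot |\mathrm{cube}|)\cdot |I_n|$, which by the assumed exponential mixing is already of the form of the last two terms of \eqref{BernsteinLatticeImprovedEq0}.

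Each decoupled cube MGF is bounded by the elementary Bennett-type inequality $\log \E{\exp(\beta X)} \le c\,\beta^2 \Var(X)$, valid because the condition on $\beta B$ in the hypothesis makes $\beta B \cdot |\mathrm{cube}|$ small at every scale. Using $\Var(X) = O(B^2 |\mathrm{cube}|)$ and summing the $2^{kN}$ cubes at scale $k$ gives a contribution of order $(\beta B)^2 |I_n|$ per scale; summing over the $K \asymp \log|I_n|$ scales together with the buffer volumes $\sum_k w_k \cdot (\mathrm{area}) = O(|I_n|^{(N-1)/N})$ produces the first term of the bound. For the innermost cubes of volume $\asymp m^N 2^{-KN}$ the Bennett step is too crude; instead one applies the trivial $\exp(\beta X) \le \exp(\beta B\cdot |\mathrm{cube}|)$ which, after optimising $K$, yields the last exponential correction with the exponent $1-(N+1)/N^2$.

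The main obstacle will be the $N$-dimensional geometric bookkeeping: verifying that the buffer widths $w_k$, the number of scales $K$, and the H\"older exponents can be tuned simultaneously so that (i) the $\alpha$-mixing decoupling error at each scale fits inside the second exponential term of \eqref{BernsteinLatticeImprovedEq0}, (ii) the cumulative variance contribution produces exactly the $|I_n|^{(N-1)/N}\log|I_n|$ factor in the first term, and (iii) the innermost contribution produces the precise exponent $1-(N+1)/N^2$. The constraints on $\beta B$ in the hypothesis --- in particular the explicit form of $\tilde{C}$ --- are precisely those under which the Bennett approximation holds at every scale and the three error terms remain simultaneously non-trivial, so matching these constraints to the chosen schedule $(w_k,K)$ is the delicate combinatorial step that drives the exact shape of the bound.
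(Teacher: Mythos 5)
Your overall architecture is the right one --- the paper's proof is indeed a lattice version of the Merlev\`ede--Peligrad--Rio Cantor-type iteration, with an Ibragimov-type decoupling at each scale and a variance bound on the terminal blocks --- but the quantitative schedule you propose breaks down at several points. First, the buffer widths $w_k \asymp 2^{-k(N+1)/N}\, m$ are not tied to $\beta$, and after $K\asymp \log m$ scales they fall below one lattice spacing, so $\alpha(w_k)$ is merely $O(1)$ there and the decoupling error $\alpha(w_k)\cdot(\text{number of blocks})\cdot\exp(\beta(K+1)B\cdot\mathrm{vol})$ at the later scales cannot be made to fit inside the third term of \eqref{BernsteinLatticeImprovedEq0}. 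The paper instead takes the relative buffer width $\delta\propto \beta B\,|I_n|/\min_i n_i$ (Equation~\eqref{BernsteinLatticeImprovedEq3c}), precisely so that $c_1\times(\text{separation})\ge 2\times\beta B\times(\text{block volume})$ at every scale. Second, the buffer-volume arithmetic is wrong: with your $w_k$ the volume removed at scale $k$ is $\asymp w_k\, m^{N-1}2^{k}$, which sums to $\Theta(|I_n|)$, not $O(|I_n|^{(N-1)/N})$, so the claimed origin of the $|I_n|^{(N-1)/N}\log|I_n|$ factor evaporates; with the $\beta$-tuned $\delta$, the trivial bound $\beta B\times(\text{removed volume})\asymp(\beta B)^2|I_n|^{2-1/N}\log|I_n|$ accumulated over the $l\lesssim\log|I_n|$ scales is what actually produces that factor.

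Third, the assertion that the hypothesis makes $\beta B\cdot|\mathrm{cube}|$ small at every scale is false: the hypothesis allows $\beta B$ up to order $|I_n|^{-(N-1)/N}$, so at the first scales $\beta B\times\mathrm{vol}\asymp|I_n|^{1/N}\gg 1$ (and your H\"older step multiplies $\beta$ by $K+1\asymp\log|I_n|$, which makes this worse), so the Bennett step is unavailable there. The iteration must be stopped at the level $l$ at which $(\beta B)^{(N+1)/N}\times(\text{block volume})$ drops below a constant, and on those terminal blocks one still needs a separate single-level big-block/small-block argument (sub-blocks of edge $A_k^{N/(N+1)}$, Davydov's covariance inequality, and Ibragimov's lemma --- the paper's part (A)); this base case is absent from your plan and is the source of the second term $A_1\beta B|I_n|\exp(-A_2(\beta B)^{-1/N})$. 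Relatedly, you have the roles of the two elementary bounds reversed: the trivial bound $\exp(\beta B\cdot\mathrm{vol})$ must be applied to the thin removed buffers, where it costs only $\beta\delta B|I_n|\,l$, not to the innermost cubes, where it would cost $\exp(\beta B|I_n|)$ and in any case yields a positive contribution rather than the negative exponential $\exp\{-A_2(\beta B)^{1-(N+1)/N^2}|I_n|^{(N-1)/N}\}$; that third term is the accumulated decoupling error $\sum_{j}2^{Nj}\alpha\bigl(\delta\min_i A_i((1-\delta)/2)^j\bigr)$, not an innermost-cube contribution.
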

\begin{proof}
Throughout the proof we use the convention to abbreviate constants by $C$. Define $\floor{s} \coloneqq (\floor{s_1},\ldots,\floor{s_N})$ for $s\in\R^d$. We extend the process $Z$ to the entire $\R^N$ with the definition $Z_s \coloneqq Z_{\floor{s}}$. In the same way, we extend the definition of the mixing coefficients consistently, $\alpha(z) = \alpha( \floor{z})$ for $z\in\R_+$. We have $\sum_{s\in I_{n}} Z_s = \int_{({e_N},n+{e_N}]} Z_s\intd{s}$, this corresponds to $\int_{(0,n]} Z_s\intd{s}$ for the process which is translated by $-{e_N}$. Write $\textbf{A} \coloneqq \prod_{i=1}^N A_i$ for the volume of the cube $(0,A]$ and set $\underline{A} \coloneqq \min\{A_k:k\in\N\}$. The proof is divided in part (A) and part (B).

We begin with part (A).
Consider the Laplace transform $\E{ \exp\left(\beta \int_{(0,A]} Z_s\intd{s} \right) }$ for $A\in\R^N$ such that $A$ satisfies \eqref{EqRatioIndex}. Firstly, we show that for a suitable constant $C^*$
\begin{align}\label{BernsteinLatticeImprovedEq1}
		\E{ \exp\left( \beta \int_{(0,A]} Z_s\intd{s} \right) } \le  \exp( C^* 2^{2N} \beta^2 B^2 \textbf{A} ) + c_0 \textbf{A}^{1/(N+1)} \exp\left( - \frac{c_1}{2} \underline{A}^{N/(N+1)} \right)
\end{align}
if 
$$\beta B \le \left[\frac{1}{2^N \textbf{A}^{N/(N+1)}} \wedge  \frac{c_1 C'^{N/(N+1)} }{2^{N+1} \textbf{A}^{N/(N+1)} }  \right] \vee 1/\textbf{A} \text{ and } \underline{A} \ge 2^{N+1}.$$
The proof is divided in two steps. In the first step, let $\beta B \textbf{A} \le 1$. We use that $e^x \le 1 + x+x^2$ for $x\le 1$ to deduce
\begin{align}
		\E{ \exp\left\{ \beta \int_{(0,A]} Z_s \intd{s} \right\} }&\le \exp\left\{ \E{ \left( \beta \int_{(0,A]} Z_s \intd{s}  \right)^2 } \right\} \label{EqDavydov0} \\
		&\le \exp\left\{	\beta^2 \int_{(0,A]} \int_{(0,A]} \E{ Z_s Z_t } \intd{s} \intd{t} \right \}  \label{EqDavydov}
\end{align}				
We can bound this last inequality \eqref{EqDavydov} with a result of \cite{davydov1968convergence} and obtain the upper bound
\begin{align*}				
				 \exp\left\{ \beta^2	\int_{(0,A]} \int_{(0,A]} \alpha( \norm{s-t}_{\max} ) B^2 \intd{s} \intd{t} \right \} \le \exp( C^* \beta^2 B^2 \textbf{A} )
\end{align*}
for a $C^*= \eta \int_{0}^{\infty} \alpha(u) u^{N-1} \intd{u}$ where $\eta$ is a constant which depends on the lattice dimension $N$. This implies \eqref{BernsteinLatticeImprovedEq1} and finishes the first step.

In the second step, let $\beta B \textbf{A} > 1$. Set $P_k = A_k^{N/(N+1)}$ and split each coordinate of the cube $(0,A]$ into intervals of length $2P_k$. $P_k$ needs not to be an integer (for $k=1,\ldots,N$). Set $U \coloneqq \prod_{k=1}^N \ceil{ A_k/(2P_k)}$. So in each dimension we can cover the interval $(0,A_k]$ by at most $2 \ceil{A_k/(2P_k)}$ disjoint intervals of length $P_k$. More precisely, we define for each $k=1,\ldots,N$ the collection of disjoint intervals
\begin{align*}
		J_{k,1} &= \bigcup_{v=1}^{\ceil{A_k/(2P_k)}} B_{k,v}^{(1)} = \bigcup_{v=1}^{\ceil{A_k/(2P_k)}} ( 2(v-1) P_k, 2(v-1) P_k + P_k], \\
		J_{k,2} &=\bigcup_{v=1}^{\ceil{A_k/(2P_k)}} B_{k,v}^{(2)} = \bigcup_{v=1}^{\ceil{A_k/(2P_k)}} ( 2(v-1) P_k + P_k, 2v P_k].
\end{align*}
We obtain
\begin{align*}
		(0,A] &= \bigtimes_{k=1}^N ( J_{k,1} \cup J_{k,2} ) = \bigcup_{a\in \{1,2\}^N } \bigtimes_{k=1}^N J_{k,a_k} = \bigcup_{a\in \{1,2\}^N } \bigtimes_{k=1}^N \bigcup_{v_k = 1}^{ \ceil{A_k/(2P_k)}} B_{k,v_k}^{(a_k)} \\ 
		&= \bigcup_{a\in \{1,2\}^N } \bigcup_{v_1=1}^{ \ceil{A_1/(2P_1)}} \ldots \bigcup_{v_N=1}^{ \ceil{A_N/(2P_N)}} \bigtimes_{k=1}^N B^{(a_k)}_{k,v_k} = \bigcup_{u=1}^{2^N} \bigcup_{j=1}^{U} I(u,j)
\end{align*}
where $I(u,j)$ equals $\bigtimes_{k=1}^N B^{(a_k)}_{k,v_k}$ for a certain $a\in\{1,2\}^N$ and $(v_1,\ldots,v_N)\in \bigtimes_{k=1}^N \{1,\ldots,\ceil{A_k/(2P_k)}\}$ for each $u=1,\ldots,2^N$ and $j=1,\ldots,U$.

Consequently, the $I(u,r)$ are disjoint cubes with edge lengths $P_k$ and each has a volume of $\textbf{P} = \prod_{k=1}^N P_k$. The distance between two cubes $I(u,r)$ and $I(u,r')$ for $r\neq r'$ is at least $\underline{p}\coloneqq\min_{k=1,\ldots,N} P_k$ for each $u=1,\ldots,2^N$. We can partition the integral as follows
$$
 \int_{(0,A]} Z_s \intd{s} = \sum_{u=1}^{2^N} \sum_{j=1}^{U}  \int_{ I(u,j) } Z_s \intd{s}  = \sum_{u=1}^{2^N} T(u),  \text{ where } T(u) = \sum_{j=1}^{U} \int_{ I(u,j) } Z_s \intd{s} .
$$
We use the inequality of arithmetic and geometric means to derive that
\begin{align}\label{BernsteinLatticeImprovedEq2a}
		\E{ \exp\left( \beta \int_{(0,A]} Z_s\intd{s} \right) } \le \frac{1}{2^N} \sum_{u=1}^{2^N} \E{ \exp\left(2^N \beta T(u)	\right) }.
\end{align}
Moreover, we obtain for the Laplace transform of $T(u)$ with the lemma of \cite{ibragimov1962some} (Lemma~\ref{IbragimovAlphaMixing}) the bound
\begin{align}\label{BernsteinLatticeImprovedEq2}
		\E{ \exp\left(2^N \beta T(u) \right) } &\le \prod_{j=1}^{U}	\E{ \exp\left(2^N \beta \int_{ I(u,j) } Z_s\intd{s}		\right) }	+ \alpha( \underline{p} ) \, U \exp\left(2^N \beta B \textbf{P} U		\right).
\end{align}
By assumption, we have $\underline{A} \ge 2^{N+1}$ which entails that $A_k / (2P_k) \ge 1$, thus, $\ceil{A_k/(2P_k)}\le A_k/P_k$ for each $k=1,\ldots,N$ and $U \le \textbf{A}/\textbf{P}$. Furthermore, we have $2^N\beta B \textbf{P} \le 1$, i.e., $\beta B \le 1/\left(2^N \textbf{A}^{N/(N+1)} \right)$. Next, we need the assumption that the mixing coefficients satisfy $\alpha(z) \le c_0\exp(-c_1 z)$ for all $z\in\R_+$. We use the same approximation within each cube $I(u,j)$ as in the above lines starting with Equation~\eqref{EqDavydov0} and obtain
\begin{align}
	\eqref{BernsteinLatticeImprovedEq2} &\le \exp\left( C(\beta B)^2 2^{2N} \textbf{P} U \right)  + c_0 \frac{ \textbf{A}}{\textbf{P}} \exp\left( - c_1 \underline{p} + 2^N \beta B \textbf{P} U \right) \nonumber \\
	&\le \exp( C^* 2^{2N} \beta^2 B^2 \textbf{A} ) + c_0 \textbf{A}^{1/(N+1)} \exp\left(-\frac{c_1}{2} \underline{A}^{N/(N+1)} \right). \label{BernsteinLatticeImprovedEq3}
\end{align}
Here we use for the $\exp$ factor in the second term the requirement that
$$ \beta B \le \frac{c_1 C'^{N/(N+1)} }{2^{N+1} \textbf{A}^{N/(N+1)} }, $$
which implies $c_1/2 \cdot \underline{A}^{N/(N+1)} \ge 2^N \beta B \textbf{A}$.
Set now $\tilde{C}\coloneqq 1/2^N \wedge c_1 C'^{N/(N+1)}/2^{N+1}$. Combining \eqref{BernsteinLatticeImprovedEq2a} with equations \eqref{BernsteinLatticeImprovedEq2} and \eqref{BernsteinLatticeImprovedEq3}, we obtain \eqref{BernsteinLatticeImprovedEq1} provided that both
\begin{align}\label{BernsteinLatticeImprovedEq3b}
	\beta B \le  \tilde{C} / \textbf{A}^{N/(N+1)} \vee 1/\textbf{A} \text{ and } \underline{A} \ge 2^{N+1}.
	\end{align}

In part (B), we assume that 
$$ 
	\tilde{C} / \textbf{A}^{N/(N+1)} \vee 1/\textbf{A} < \beta B \le \left(	C' \tilde{C}^{(N+1)/N^2} /2^{N+3} \right)^{ N^2/(N+1)} \wedge \frac{1}{2}\frac{c_1 C'}{2^{N+1}} \frac{1}{\textbf{A}^{(N-1)/N} } .
	$$
We follow the ideas of \cite{merlevede2009} and partition the cube $(0,A]$ in Cantor set-like elements. Therefore, let $\delta \in (0,1)$ be defined as follows
\begin{align}\label{BernsteinLatticeImprovedEq3c}
		\delta \coloneqq \frac{2^{N+1}}{c_1} \beta B \frac{ \textbf{A}}{\underline{A}}.
		\end{align}
By assumption, we have that $\underline{A}\ge C' A_k$ for $k=1,\ldots,N$ and that $\beta B \le \frac{1}{2}\frac{c_1 C'}{2^{N+1}} \textbf{A}^{(1-N)/N}$, thus $\delta \le 1/2$.

We partition each interval $(0,A_k]$ into a middle interval of length $\delta A_k$ and two outer intervals each of length $(1-\delta)/2 A_k$. The outer intervals form outer cubes within the cube $(0,A]$ of measure $(1-\delta)^N/2^N \textbf{A}$, there are $2^N$ outer cubes in total. The remaining number $3^N-2^N$ form those cubes which have at least in one dimension $k$ an edge length of $\delta A_k$ and for which one edge is an inner interval. The total measure of the outer cubes is $2^N \cdot (1-\delta)^N/2^N \textbf{A} = (1-\delta)^N \textbf{A}$, the measure of the residual cubes is $(1-(1-\delta)^N)\textbf{A}$. Denote by $\{O^{(1)}_j : j=1,\ldots,2^N \}$ the collection of the outer cubes. Then the Laplace transform can be bounded as
\begin{align}
		 \E{ \exp\left( \beta \int_{(0,A]} Z_s \intd{s} \right) } &\le \E{ \exp\left( \beta \int_{ \bigcup_{j=1}^{2^N} O^{(1)}_j } Z_s \intd{s} \right) } \exp\left\{	\beta B \textbf{A} (1-(1-\delta)^N )	\right\} \nonumber \\
		\begin{split}\label{BernsteinLatticeImprovedEq4}
		&\le \left\{ \prod_{j=1}^{2^N} \E{\exp\left( \beta \int_{ O^{(1)}_j}  Z_s \intd{s} \right) } + \alpha( \delta \underline{A} ) 2^N  \prod_{j=1}^{2^N} \exp\left(	\beta B \textbf{A} \left(\frac{1-\delta}{2}\right)^N \right) \right\} \\
		&\quad \cdot \exp\left\{	\beta B \textbf{A} (1-(1-\delta)^N )	\right\},
		\end{split}
\end{align}
where the last Equation~\eqref{BernsteinLatticeImprovedEq4} is once more a result of \cite{ibragimov1962some}. Next, use the relation $|\log x - \log y| \le |x-y|$ if $x,y \ge 1$ to obtain for the logarithm of the Laplace transform with the help of \eqref{BernsteinLatticeImprovedEq4} the upper bound
\begin{align}
		&\log \E{ \exp\left( \beta \int_{(0,A]} Z_s \intd{s} \right) } \nonumber \\
		\begin{split}
		&\le \sum_{j=1}^{2^N} \log \E{\exp\left( \beta \int_{ O^{(1)}_j}  Z_s \intd{s} \right) } \\
		&\quad + 2^N c_0 \exp\left( -c_1 \underline{A}\delta + 2^N \beta B \textbf{A} \left(\frac{1-\delta}{2} \right)^N \right) + \beta B \textbf{A} \left(1-(1-\delta)^N \right).\label{BernsteinLatticeImprovedEq5}
		\end{split}
\end{align}
We can repeat the computations for the Laplace transform on the sets $O^{(1)}_{j_1}$. By formally replacing the cube $(0,A]$ with the cube $O^{(1)}_{j_1}$, we obtain a similar bound in terms of new outer subcubes w.r.t.\ $O^{(1)}_{j_1}$, these are given by
$$ \left\{ O^{(2)}_{j_2}: j_2 = 1+(j_1-1)2^N,\ldots,2^N+(j_1-1)2^N \right\}$$
for $j_1 = 1,\ldots,2^N$. Here we have to replace in \eqref{BernsteinLatticeImprovedEq5} as well $\underline{A}$ by $\underline{A}\frac{1-\delta}{2}$ and $\textbf{A}$ by $\textbf{A} \left(\frac{1-\delta}{2}\right)^N$ . Next, define the number $l$ by
$$ l \coloneqq \inf\left\{ k\in \Z: (\beta B)^{(N+1)/N} \textbf{A} \left(\frac{1-\delta}{2}\right)^{Nk} \le \tilde{C}^{(N+1)/N} \right\}, $$
where $\tilde{C} = 1/2^N \wedge c_1 C'^{N/(N+1)} /2^{N+1} $. Note that this definition is meaningful because we are in the case where $(\beta B)^{(N+1)/N} \textbf{A} > \tilde{C}^{(N+1)/N}$. Write $O^{(0)}_1$ for the cube $(0,A]$.

After further $l-1$ iterations of \eqref{BernsteinLatticeImprovedEq5}, we obtain the following bound with the sets $\left\{ O^{(l)}_{j_l}: j_l= 1,\ldots, 2^{Nl} \right\}$
\begin{align}
		\log \E{ \exp\left( \beta \int_{(0,A]} Z_s \intd{s} \right) } &= \log \E{ \exp\left( \beta \int_{O^{(0)}_1 } Z_s \intd{s} \right) } \nonumber \\
		\begin{split}\label{BernsteinLatticeImprovedEq6}
		&= \sum_{j_l=1}^{2^{Nl}} \log \E{ \exp\left( \beta \int_{ O^{(l)}_{j_l} } Z_s \intd{s} \right) } \\
		&\quad + \sum_{j=0}^{l-1} \beta B \textbf{A} \left( \frac{1-\delta}{2} \right)^{Nj} \left(1-(1-\delta)^N \right) 2^{Nj} \\
		&\quad + \sum_{j=0}^{l-1} c_0 2^{N(j+1)} \exp\left\{	- c_1 \underline{A} \left(\frac{1-\delta}{2}\right)^j \delta+ 2^N \beta B \textbf{A} \left(\frac{1-\delta}{2}\right)^{N(j+1 )} \right\}.
		\end{split}
\end{align}
We can bound the three sums in \eqref{BernsteinLatticeImprovedEq6}, therefore we use the following inequalities which follow from the definition of $l$
$$ 2^{Nl} \le C (\beta B)^{(N+1)/N} \textbf{A}, \quad l \le C \log \textbf{A}  \text{ and }  \textbf{A} \left(\frac{1-\delta}{2}\right)^{N(l-1)} > \left(\frac{\tilde{C}}{\beta B}\right)^{(N+1)/N}. $$
The second sum in \eqref{BernsteinLatticeImprovedEq6} is at most 
\begin{align}\label{BernsteinLatticeImprovedEq7}
		\sum_{j=0}^{l-1} \beta B \textbf{A} \left( \frac{1-\delta}{2} \right)^{Nj} \left(1-(1-\delta)^N \right) 2^{Nj} &\le \beta B \textbf{A} (1-(1-\delta)^N ) l \le \beta \delta B \textbf{A} l \le C \beta \delta B \textbf{A} \log \textbf{A}.
\end{align}
Next, we apply the inequality from \eqref{BernsteinLatticeImprovedEq1} to bound the first sum in \eqref{BernsteinLatticeImprovedEq6}. Therefore, we need that the requirements of \eqref{BernsteinLatticeImprovedEq3b} are satisfied: it follows from the definition of $l$ that $ \beta B \le \tilde{C}  \big/ \left( \textbf{A} \left(\frac{1-\delta}{2}\right)^{Nl} \right)^{N/(N+1)}$. Moreover, we need that $\underline{A} \left( \frac{1-\delta}{2} \right)^l \ge 2^{N+1}$: using the fact that $\delta \le 1/2$, we find
\begin{align*}
		\underline{A} \left(\frac{1-\delta}{2}\right)^l \ge C' \left(	\textbf{A}\left(\frac{1-\delta}{2}\right)^{N(l-1)}	\right)^{1/N} \frac{1-\delta}{2} \ge C' \left( \frac{\tilde{C}}{\beta B} \right)^{(N+1)/N^2} \frac{1}{4} \ge 2^{N+1}.
\end{align*}
The last inequality follows because $\beta B \le \left(	C' \tilde{C}^{(N+1)/N^2} /2^{N+3} \right)^{ N^2/(N+1)}$. Hence, the first sum in \eqref{BernsteinLatticeImprovedEq6} can be estimated similarly as in \eqref{BernsteinLatticeImprovedEq2}:
\begin{align}\label{BernsteinLatticeImprovedEq8}
	&2^{Nl} \log  \E{ \exp\left( \beta \int_{ O^{(l)}_{1} } Z_s \intd{s} \right) } \nonumber \\
	&\le 2^{Nl} \left\{ C^* 2^{2N} \beta^2 B^2 \textbf{A} \left( \frac{1-\delta}{2}\right)^{Nl} + c_0 \textbf{A}^{1/(N+1)} \left( \frac{1-\delta}{2}\right)^{Nl/(N+1)} \exp\left( - \frac{c_1}{2} \underline{A}^{N/(N+1)} \left( \frac{1-\delta}{2}\right)^{Nl/(N+1)} \right) \right\} \nonumber \\
	&= C^* 2^{2N} (\beta B)^2 \textbf{A} (1-\delta)^{Nl} + c_0 \textbf{A}^{1/(N+1)} (1-\delta)^{Nl/(N+1)} 2^{N^2 l/(N+1)} \exp\left( - \frac{c_1}{2} \underline{A}^{N/(N+1)} \left( \frac{1-\delta}{2}\right)^{Nl/(N+1)} \right) \nonumber \\
	&\le C (\beta B)^2 \textbf{A} + C  \beta B \textbf{A} \exp\left( - \frac{c_1}{2} C'^{N/(N+1)}  \left( \frac{1-\delta}{2}\right)^{N/(N+1)} \left( \frac{\tilde{C}}{\beta B} \right)^{1/N} \right).
\end{align}
Consequently using the definition of $\delta$, we can bound \eqref{BernsteinLatticeImprovedEq7} and \eqref{BernsteinLatticeImprovedEq8} together by
\begin{align}\label{BernsteinLatticeImprovedEq9}
	C (\beta B)^2 \textbf{A} \left(1+\textbf{A}^{1-1/N}  \log \textbf{A} \right) + C \beta B \textbf{A} \exp\left( - C  (\beta B)^{-1/N} \right).
\end{align}
For the third sum in \eqref{BernsteinLatticeImprovedEq6} we need the condition that $\frac{c_1}{2} \underline{A}\delta \ge \beta B \textbf{A} (1-\delta)^N$. This is implied by the definition of $\delta$ from \eqref{BernsteinLatticeImprovedEq3c}, thus,
\begin{align}\label{BernsteinLatticeImprovedEq10}
	&\sum_{j=0}^{l-1} c_0 2^{N(j+1)} \exp\left\{	- c_1 \underline{A}\delta \left(\frac{1-\delta}{2}\right)^j + 2^N \beta B \textbf{A} \left(\frac{1-\delta}{2}\right)^{N(j+1 )} \right\} \nonumber \\
	&\le \sum_{j=0}^{l-1} c_0 2^{N(j+1)} \exp\left\{	- \frac{c_1}{2} \underline{A}\delta \left(\frac{1-\delta}{2}\right)^j \right\} \nonumber \\
	&\le 2^N c_0 \frac{2^{Nl}-1}{2^N-1} \exp\left\{ -\frac{c_1}{2} \delta \underline{A} \left(\frac{1-\delta}{2} \right)^l \right\} \nonumber \\
	&\le C (\beta B)^{(N+1)/N} \textbf{A} \exp\left\{ -\frac{c_1}{2} \delta \underline{A} \left(\frac{1-\delta}{2} \right)^l \right\} \nonumber \\
	& \le C (\beta B )^{(N+1)/N} \textbf{A} \exp\left\{ - C (\beta B)^{1-(N+1)/N^2} \textbf{A}^{1-1/N} \right\}.
\end{align}
Hence, combining \eqref{BernsteinLatticeImprovedEq9} with \eqref{BernsteinLatticeImprovedEq10} yields
\begin{align}\begin{split}\label{BernsteinLatticeImprovedEq11}
		\log \E{ \exp\left\{ \beta \int_{(0,A]} Z_s\intd{s} \right\} } &\le 	C (\beta B)^2 \textbf{A} \left(1+\textbf{A}^{1-1/N}  \log \textbf{A} \right) + C \beta B \textbf{A} \exp\left( - C  (\beta B)^{-1/N} \right) \\
		&\quad +  C (\beta B )^{(N+1)/N} \textbf{A} \exp\left\{ - C (\beta B)^{1-(N+1)/N^2} \textbf{A}^{1-1/N} \right\}
		\end{split}
\end{align}
if $\tilde{C} / \textbf{A}^{N/(N+1)} <\beta B \le c_1 C'/2^{N+2} \textbf{A}^{-(N-1)/N} \wedge \left(	C' \tilde{C}^{(N+1)/N^2} /2^{N+3} \right)^{ N^2/(N+1)}$.
Comparing \eqref{BernsteinLatticeImprovedEq11} with \eqref{BernsteinLatticeImprovedEq1} in the case that $\beta B \le \tilde{C} /\textbf{A}^{N/(N+1)}$ yields the result.
\end{proof}

The proof of Proposition~\ref{BernsteinLatticeImproved} reveals that for spatial data the rate of convergence is determined by the fact that the distance between the blocks decays at a rate $\underline{p}$, however, the number of observations within a block is at least $\underline{p}^N$. Compare the last term (resp. factor) on the right-hand side of \eqref{BernsteinLatticeImprovedEq2} (resp. \eqref{BernsteinLatticeImprovedEq4}) which in both cases is due to the $\alpha$-mixing property, see the lemma of \cite{ibragimov1962some}. So if $N>1$, the decreasing mixing coefficient can not fully compensate for the sample which grows like a polynomial of degree $N$. We see this in the next corollary which shows that the exponential decay is determined by the effective sample size $|I_n|^{1/N}$.

\begin{corollary}\label{CorBernsteinImproved}
Let the real-valued random field $Z$ satisfy all conditions from Proposition~\ref{BernsteinLatticeImproved}. Then there are constants $A_1,A_2\in\R_+$ such that for all $\epsilon > 0$
\begin{align*}
		\p\Big( |I_{n}|^{-1} \Big| \sum_{s \in I_{n}} Z_s \Big| \ge \epsilon \Big) \le A_1 \exp\left( - A_2 \frac{\epsilon}{B} \frac{ |I_{n}|^{1/N} }{ (\log |I_{n}|)^2 } \right).
\end{align*}
\end{corollary}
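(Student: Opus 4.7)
The plan is to derive the bound by a Chernoff/Markov argument from Proposition~\ref{BernsteinLatticeImproved}. For any admissible $\beta>0$,
\[
\p\Big(\sum_{s\in I_n} Z_s \ge \epsilon|I_n|\Big)\le \exp(-\beta\epsilon|I_n|)\,\E{\exp\Big(\beta\sum_{s\in I_n} Z_s\Big)},
\]
and the lower tail is handled identically after replacing $Z_s$ by $-Z_s$, which has the same bound $B$ and the same $\alpha$-mixing coefficients (mixing is symmetric in the sign). A union bound then gives the two-sided statement.

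I would set $\beta B = a\,|I_n|^{-(N-1)/N}(\log|I_n|)^{-2}$ for a small constant $a>0$ depending only on $N,C',c_1,A_1,A_2$. For $|I_n|$ large this lies in the admissibility window of Proposition~\ref{BernsteinLatticeImproved}, whose binding piece is of the form $(c_1C'/2^{N+2})|I_n|^{-(N-1)/N}$. The choice is engineered so that $\beta\epsilon|I_n|=a(\epsilon/B)|I_n|^{1/N}(\log|I_n|)^{-2}$, i.e., exactly the target rate. Substituting into \eqref{BernsteinLatticeImprovedEq0}, the ``Gaussian'' first term evaluates to $O(a^2|I_n|^{1/N}(\log|I_n|)^{-3})$, and the two remainders are super-polynomially small in $|I_n|$ because their exponents $(\beta B)^{-1/N}\sim|I_n|^{(N-1)/N^2}(\log|I_n|)^{2/N}$ and $(\beta B)^{1-(N+1)/N^2}|I_n|^{(N-1)/N}$ diverge as positive powers of $|I_n|$ up to logarithmic factors. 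Hence the Chernoff exponent is at most
\[
-a(\epsilon/B)|I_n|^{1/N}(\log|I_n|)^{-2}+C a^{2}|I_n|^{1/N}(\log|I_n|)^{-3}+o(1),
\]
and choosing $a$ small (uniformly in $\epsilon,n$) makes the linear target dominate the Gaussian correction, yielding the claimed exponential after relabelling $A_1,A_2$.

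The remaining subtlety concerns very small $\epsilon$. When $\epsilon/B$ falls below a constant multiple of $1/\log|I_n|$, the right-hand side of the claimed inequality is bounded below by a positive number tending to $A_1$ as $\epsilon\to 0^+$, so enlarging $A_1$ once makes the inequality trivially satisfied via $\p(\cdot)\le 1$. Alternatively one may re-run the Chernoff argument with the Bernstein-optimal choice $\beta\propto\epsilon$, obtaining a sub-Gaussian bound $\exp(-c(\epsilon/B)^{2}|I_n|^{1/N}/\log|I_n|)$ that majorises the claim outside a negligible window. The main obstacle is the bookkeeping: simultaneously ensuring the admissibility of $\beta$, the negligibility of the two exponential remainders in \eqref{BernsteinLatticeImprovedEq0}, and the dominance of the linear term over the Gaussian correction for one single choice of the parameter $a$.
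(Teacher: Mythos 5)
Your proposal follows essentially the same route as the paper's own proof: the identical choice $\beta B \propto |I_n|^{-(N-1)/N}(\log|I_n|)^{-2}$, a Markov/Chernoff bound (the paper absorbs the two-sidedness into a factor $2$ rather than negating $Z_s$), and the observation that with this $\beta$ the Gaussian term and the two exponential remainders in \eqref{BernsteinLatticeImprovedEq0} are dominated by $\beta\epsilon|I_n| \propto (\epsilon/B)\,|I_n|^{1/N}(\log|I_n|)^{-2}$. Your closing remarks on the regime $\epsilon/B \lesssim 1/\log|I_n|$ address a case the paper passes over in silence, so they are a welcome (if not fully watertight) addition rather than a deviation.
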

\begin{proof}
Choose $\beta \propto (B |I_n|^{(N-1)/N} (\log |I_n|)^2 )^{-1}$. Then we infer from Proposition~\ref{BernsteinLatticeImproved} that this choice is admissible (if $n$ is sufficiently large). Furthermore, we obtain with Markov's inequality
\begin{align}\label{CorBernsteinImprovedEq0}
		\p\left( |I_n|^{-1}\left| \sum_{s \in I_{n}} Z_s \right| \ge \epsilon \right) \le 2 \exp\left(-\beta |I_n| \epsilon \right) \E{\exp\left( \beta \sum_{s\in I_n} Z_s \right) }.
\end{align}
Thus, the expression inside the first $\exp$-factor is proportional to $ \beta |I_n| \propto |I_n|^{1/N} / B (\log |I_n|)^2$. Furthermore, a comparison with the requirements of Proposition~\ref{BernsteinLatticeImproved} shows that it remains to compute the quantities
\begin{align*}
		& (\beta B)^2 |I_n| |I_n|^{(N-1)/N} \log |I_n| \propto |I_n|^{1/N} / (\log |I_n|)^{3} \text{ and } (\beta B)^{(N+1)/N} |I_n| \propto |I_n|^{1/N^2} / (\log |I_n| )^{2(N+1)/N}.
\end{align*}
Hence, the first $\exp$-factor in \eqref{CorBernsteinImprovedEq0} dominates the second $\exp$-factor and we obtain the desired result.
\end{proof}

Next, we give an exponential inequality for centered Hilbert space-valued random variables. Therefore, we need two conditions: the first states that the tail of the entire distribution vanishes at an exponential rate. The second requires that the contribution of a further marginal dimension decays exponentially as well. In particular, this last assumption is not uncommon, see e.g., \cite{bosq_linear_2000}.

\begin{theorem}\label{BernsteinHilbert}
Let $\cH$ be a separable Hilbert space with inner product $\scalar{\cdot}{\cdot}$ and orthonormal basis $\{ e_j: j\in \N \}$. Let $\{ Z_{s}: {s}\in \Z^N \}$ be a random field on $\Z^N$, $N\in\N_+$, the marginals of which take values in $\cH$ and satisfy $\E{Z_{s} } =0$. The $Z_{s}$ satisfy uniformly in $s\in\Z^N$ the conditions
\begin{align}\label{EqBernsteinHilbert0} 
 \E{ \scalar{Z_{s}}{e_j}^2 } \le d_0 \exp(-d_1 j ) \text{ for all } j\in\N \text{ and }  \p( \hnorm{Z_{s}} \ge z ) \le \kappa_0 \exp( -\kappa_1 z^{\gamma} ) 
\end{align}
for positive constants $d_0,d_1\kappa_0,\kappa_1$ and $\gamma$. The mixing coefficients of the random field decrease exponentially as in Proposition~\ref{BernsteinLatticeImproved} and there is a lower bound $C'$ for the ratio between the smallest and the largest coordinate of $n$ as in Equation \eqref{EqRatioIndex}. Moreover, let $\epsilon>0$. Then there are constants $A_1$ and $A_2$ such that
\begin{align*}
		\p\left( |I_{n}|^{-1}	 \hnorm{\sum_{s\in I_{n}} Z_{s} } \ge \epsilon \right) &\le A_1 \exp\left\{  - A_2 \left(\frac{ \epsilon |I_{n}|^{1/N} }{(\log |I_{n}|)^2}\right)^{2\gamma/ (2+3\gamma)} \right\} .
\end{align*}
$A_1$ and $A_2$ depend on the decay rate of the mixing coefficients, on the tail parameters $\gamma,\kappa_i,d_i$ and on $C'$ but not on ${n}$. Additionally, $A_1$ depends polynomially on $|I_n|$ and $\epsilon$. If additionally $\gamma \ge 1$, $A_1$ does not depend on $\epsilon$ and $|I_n|$ and
\begin{align*}
		\p\left( |I_{n}|^{-1}	  \hnorm{\sum_{{s}\in I_{n}} Z_{s} } \ge \epsilon \right) &\le A_1 \exp\left\{  - A_2 \left(\frac{ \epsilon |I_{n}|^{1/N} }{(\log |I_{n}|)^2}\right)^{2/5 } \right\} \\
		&\quad\cdot \Biggl\{	\epsilon^{-2} + \left(\frac{ \epsilon |I_{n}|^{1/N} }{(\log |I_{n}|)^2}\right)^{2/5} +\left(\frac{|I_{n}|^{1/N} }{(\log |I_{n}|)^2}\right)^{1/5 }\; \epsilon^{- 4/5} \Biggl\}. 
\end{align*}
\end{theorem}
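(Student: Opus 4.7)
The plan is to combine a Hilbert-norm truncation with a finite-dimensional Fourier projection, reducing the Hilbert-valued tail bound to Corollary~\ref{CorBernsteinImproved} applied coordinate by coordinate. Abbreviate $L_n := |I_{n}|^{1/N}/(\log |I_{n}|)^2$. Fix levels $\tau = \tau_n > 0$ and $k_n \in \N_+$ to be optimised later, and split each summand as $Z_{s} = Z_{s}^{(1)} + Z_{s}^{(2)}$, where
\begin{align*}
Z_{s}^{(1)} &:= Z_{s}\1{\hnorm{Z_{s}}\le \tau} - \E{Z_{s}\1{\hnorm{Z_{s}}\le \tau}}, \\
Z_{s}^{(2)} &:= Z_{s}\1{\hnorm{Z_{s}} > \tau} - \E{Z_{s}\1{\hnorm{Z_{s}} > \tau}}.
\end{align*}
Both pieces are centred, $Z_{s}=Z_{s}^{(1)}+Z_{s}^{(2)}$ because $\E{Z_{s}}=0$, and $\hnorm{Z_{s}^{(1)}}\le 2\tau$. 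Integration by parts against \eqref{EqBernsteinHilbert0} gives $\E{\hnorm{Z_{s}}^2\1{\hnorm{Z_{s}}>\tau}}\le C\tau^2 \exp(-\kappa_1\tau^\gamma)$, so that Cauchy--Schwarz and Markov applied to $\sum_s Z_{s}^{(2)}$ produce a tail of order $C\tau^2(\epsilon^2|I_{n}|)^{-1}\exp(-\kappa_1\tau^\gamma)$.

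For the bounded piece I would use Parseval, $\hnorm{\sum_s Z_{s}^{(1)}}^2 = \sum_{j\ge 1} (\sum_s \scalar{Z_{s}^{(1)}}{e_j})^2$, and split at the index $j=k_n$. The high-frequency remainder is controlled by Markov and the $\alpha$-mixing variance bound $\Var(\sum_s \scalar{Z_{s}^{(1)}}{e_j})\le C|I_{n}|\E{\scalar{Z_{0}}{e_j}^2}$; summing the geometric series in $j$ yields a factor $\exp(-d_1 k_n)$. For each coordinate $j\le k_n$ the real-valued process $\scalar{Z_{s}^{(1)}}{e_j}$ is centred, bounded by $2\tau$, and inherits the exponentially decaying $\alpha$-mixing coefficients of $Z$, so Corollary~\ref{CorBernsteinImproved} (with $B=2\tau$ and threshold $\epsilon/\sqrt{2k_n}$) gives a tail of order $A_1 \exp(-A_2\epsilon L_n/(\sqrt{k_n}\tau))$; a union bound over the $k_n$ coordinates costs only a factor $k_n$. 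The key Hilbertian identity is that Parseval forces each coordinate to compete against $\epsilon/\sqrt{k_n}$ rather than $\epsilon/k_n$, which is what delivers the sharper rate.

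Collecting the three exponents $\kappa_1 \tau^\gamma$, $d_1 k_n$ and $A_2\epsilon L_n/(\sqrt{k_n}\tau)$ and equating them to a common rate $r$ forces $\tau \asymp r^{1/\gamma}$, $k_n \asymp r$ and $\epsilon L_n \asymp r^{3/2 + 1/\gamma} = r^{(2+3\gamma)/(2\gamma)}$, yielding $r \asymp (\epsilon L_n)^{2\gamma/(2+3\gamma)}$. For the first assertion I would absorb the polynomial pre-factors $k_n$, $\tau^2(\epsilon^2|I_{n}|)^{-1}$ and $(\epsilon^2|I_{n}|)^{-1}$ into $\exp(-A_2 r)$ at the cost of halving $A_2$, which is permitted since $A_1$ may grow polynomially in $|I_{n}|$ and $\epsilon$. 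For the refined $\gamma\ge 1$ display I would instead specialise the balancing to $\gamma=1$ and retain the pre-factors explicitly: they reproduce the three summands $\epsilon^{-2}$, $(\epsilon L_n)^{2/5}$ and $L_n^{1/5}\epsilon^{-4/5}$ coming respectively from Markov on the high-frequency remainder, the union bound over the $k_n$ Bernstein coordinates, and Markov on $\sum_s Z_{s}^{(2)}$.

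The main obstacle is the optimisation step: obtaining the rate $2\gamma/(2+3\gamma)$, rather than the weaker $\gamma/(2\gamma+1)$ one obtains by working with $\sum_j |\cdot|$ instead of $(\sum_j (\cdot)^2)^{1/2}$, hinges on the Parseval reduction to the per-coordinate threshold $\epsilon/\sqrt{k_n}$, and tracking the polynomial pre-factors sharply enough for the $\gamma \ge 1$ refinement requires carrying constants through all three Markov and union-bound estimates rather than absorbing them into the exponential.
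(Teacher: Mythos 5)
Your proposal is essentially the paper's own argument: project onto the first $m$ basis coordinates, use Parseval so that the union bound only has to beat the threshold $\epsilon/\sqrt{m}$ per coordinate, control each coordinate by a truncation combined with Corollary~\ref{CorBernsteinImproved}/Proposition~\ref{BernsteinLatticeImproved}, bound the high-frequency remainder by Markov and the coordinate-variance decay, and balance the three exponents to get $2\gamma/(2+3\gamma)$. The only difference is cosmetic --- you truncate $\hnorm{Z_s}$ globally before projecting, while the paper truncates each real coordinate $\scalar{S_n}{e_j}$ after projecting (citing the argument of \cite{valenzuela2017bernstein}) --- and this does not change the rate or the structure of the polynomial prefactors.
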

\begin{proof}
Following \cite{bosq_linear_2000}, we decompose the sum $S_{n} = \sum_{{s}\in I_{n}} Z_{s}$ in a finite-dimensional part and a remainder. Then we bound the latter with the help of the decay in the single coordinates and apply the exponential inequality for finite-dimensional random variables to the first part. More precisely, the following decomposition is true for each natural number $m$
\begin{align}
		\p\left( |I_{n}|^{-1} \hnorm{S_{n}} \ge \epsilon \right) &\le 	\p\left( \sum_{j=1}^m \scalar{S_{n}}{e_j}^2 \ge (|I_{n}| \epsilon /2 )^2 \right)  + \p\left( \sum_{j=m+1}^{\infty} \scalar{S_{n}}{e_j}^2 \ge (|I_{n}| \epsilon /2 )^2  \right) \nonumber \\
		&\le  \sum_{j=1}^m  \p\left(\scalar{S_{n}}{e_j}^2 \ge \frac{ (|I_{n}| \epsilon )^2}{4 m}  \right) + \E{  \sum_{j=m+1}^{\infty} \scalar{S_{n}}{e_j}^2  } \left(  \frac{ \epsilon |I_n|}{2 } \right)^{-2} \nonumber \\
		&\le m\cdot \max_{1\le j \le m} \p\left( |\scalar{S_{n}}{e_j} | \ge \frac{ |I_{n}| \epsilon}{2 \sqrt{m} } \right) + \left( \frac{2}{\epsilon  } \right)^2 \sum_{j=m+1}^{\infty}  \E{ \scalar{Z_{e_N}}{e_j}^2 } . \label{EqBernsteinHilbert1}
\end{align}
By assumption, there are $d_0,d_1\in\R_+$ such that $\sum_{j=m+1}^{\infty}  \E{ \scalar{Z_{e_N}}{e_j}^2 } \le \sum_{j=m+1}^{\infty} d_0 \exp(-d_1 j )$. Hence, the second term in \eqref{EqBernsteinHilbert1} decays at an exponential rate. Note that we do not use a covariance inequality for $\alpha$-mixing spatial processes for the second term in \eqref{EqBernsteinHilbert1} at this point because it would not increase significantly the overall rate of convergence. We apply the inequality from Proposition~\ref{BernsteinLatticeImproved} to the first term and use the assumption that the tail of the random variables decays exponentially, i.e., $\p( |Z_{s}| \ge z ) \le \kappa_0 \exp( -\kappa_1 z^{\gamma} ) $. We obtain with similar arguments as in \cite{valenzuela2017bernstein}
\begin{align}
		\p\left( |\scalar{S_{n}}{e_j} | \ge  |I_{n}| \epsilon \right) \le \inf_{ D > 0} \left( A_1 D^{1-\gamma} \epsilon ^{-1} \exp\left\{ - A_2 D^{\gamma} \right\} + A_1 \exp\left\{ -A_2 \frac{\epsilon  |I_{n}|^{1/N} }{D (\log |I_{n}|)^2} \right\} \right), \label{EqBernsteinHilbert2}
\end{align}
where the constants $A_1$ and $A_2$ only depend on the coefficients $\kappa_0$ and $\kappa_1$ which bound the tail of the distribution, the lattice dimension $N$ and the mixing coefficients. We can approximately equate both terms in \eqref{EqBernsteinHilbert2} with the choice $D = \left( R({n})\epsilon \right)^{1/(1+\gamma)}$, where $R({n}) \coloneqq |I_{n}|^{1/N} / (\log |I_{n}|)^2$. In particular, we obtain for \eqref{EqBernsteinHilbert1} the following asymptotic bound if we insert \eqref{EqBernsteinHilbert2} for the finite-dimensional part
\begin{align}\begin{split}\label{EqBernsteinHilbert3}
		\p\left( |I_{n}|^{-1}	 \hnorm{S_{n}} \ge \epsilon \right) &\le \inf_{m\in\N} \Biggl(  A_1 m \left\{ 1+ R({n})^{ (1-\gamma)/(1+\gamma)} \left(\frac{\epsilon}{\sqrt{m}}\right) ^{-2\gamma /(1+\gamma) } \right\} \\
		&\qquad\qquad\qquad \cdot \exp\left\{ -A_2 \left( \frac{ R({n})\epsilon}{\sqrt{m}} \right)^{\gamma/(1+\gamma)} \right\} + A_3 \frac{ \exp\left\{ -A_4 m		\right\} }{\epsilon^2  } \Biggl).
\end{split}\end{align}
Here the constants $A_1,\ldots,A_4$ do not depend on $m$ and $n$. Again, both term are approximately equal for the choice $m \coloneqq \floor{ ( R({n}) \epsilon)^{2\gamma /(2+3\gamma)}	}$. In this case, \eqref{EqBernsteinHilbert3} reduces to
\begin{align*}
		\p\left( |I_{n}|^{-1}	 \hnorm{S_{n}} \ge \epsilon \right) &\le A_1 \exp\left\{  - A_2 \left( \epsilon R({n}) \right)^{2\gamma/ (2+3\gamma)} \right\} \Biggl\{		 \epsilon^{-2} + \left( R({n})\epsilon \right)^{2\gamma / (2+3\gamma)} \\
		&\quad+ R({n})^{2(1+\gamma-\gamma^2)/[ (2+3\gamma)(1+\gamma) ] }\; \epsilon^{- \gamma(3+5\gamma) /[ (2+3\gamma)(1+\gamma)] } \Biggl\}. 
\end{align*}
This finishes the proof.
\end{proof}

\section{Exponential inequalities for \texorpdfstring{$\cC$}{C}-weakly dependent spatial processes}\label{Section_ExpInequalitiesPhiMixing}
The aim of this section is to derive exponential inequalities for $\cC$-weakly dependent spatial processes. We assume for the next proposition that $\{ (X_s,y_s):s\in\N^N\}$ is a stationary random field. The $X_s$ take values in the Banach space $\cS$, the $y_s$ are real-valued and bounded by $B\coloneqq \norm{y_s}_{\p,\infty} <\infty$. $\norm{\cdot}^\sim$ is a pseudo-norm on the space of operators $\cC$ from Equation~\eqref{DefinitionCC}. Note that $\cC$ contains elements which are not necessarily linear and that the coefficients $\phi_{\cC,y_s}(i)$ from \eqref{DefPhiCV} depend on the choice of the pseudo-norm. We obtain with these assumptions:

\begin{proposition}\label{WeaklyCDependent}
Let $\{ (X_s,y_s):s\in\N^N\}$ be stationary such that the coefficients from \eqref{DefPhiCV} satisfy $\sum_{i=1}^{\infty} \phi_{\cC,y_s}(i)<\infty$. Let $\tilde{g}: \cS\rightarrow \R$ be a bounded operator, i.e., $\sup_{x\in\cS} |\tilde{g}(x)|<\infty$. Define
 $S_n =  \sum_{s\in I_n} y_s \tilde{g}(X_s)$.
Then there are constants $A_1,A_2$ which depend on the lattice dimension $N$, the coefficients $\phi_{\cC,y_s}$ but neither on $n\in\N^N$ nor on $B$ such that
\begin{align}\label{ExpPhiMixing}
		\p\left( |I_n|^{-1} \left| S_n - \E{ S_n} \right| \ge \epsilon  \right) \le A_1 \exp\left( -A_2 \epsilon ^2 |I_n|^{1/N}  (\norm{\tilde{g}}^{\sim})^{-1} B^{-2} \right).
\end{align}
\end{proposition}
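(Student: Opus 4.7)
The plan is to mirror the big-block/small-block strategy of Proposition~\ref{BernsteinLatticeImproved}, but to replace the $\alpha$-mixing decoupling via Ibragimov's lemma by one that uses the coefficient $\phi_{\cC,y_s}$ directly. Setting $Y_s := y_s\tilde g(X_s) - \E{y_s\tilde g(X_s)}$ so that $S_n - \E{S_n} = \sum_{s\in I_n}Y_s$, the starting point is the Chernoff/Markov inequality
\begin{equation*}
\p\bigl(|I_n|^{-1}(S_n - \E{S_n}) \ge \epsilon\bigr) \le \exp(-\beta|I_n|\epsilon)\,\E{\exp\bigl(\beta(S_n-\E{S_n})\bigr)}
\end{equation*}
and its symmetric lower-tail counterpart, which reduce the problem to estimating the Laplace transform of $\sum_{s\in I_n}Y_s$ for a suitable $\beta>0$.

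To control this Laplace transform I would partition $I_n$ into big cubes of side $p$ separated by corridors of width $q$, producing $M\asymp|I_n|/(p+q)^N$ big cubes each carrying $p^N$ observations. The crucial decoupling uses the covariance characterisation recalled just after \eqref{DefPhiC}: since $\tilde g/\norm{\tilde g}^\sim\in\cC_1$ and the factor $y_s/B$ is absorbed in the definition of $\phi_{\cC,y_s}$, for any $\cM_k$-measurable random variable $Z$ with $\norm{Z}_{\p,1}\le 1$ and any $s$ at distance at least $i$ from the conditioning set one obtains
\begin{equation*}
\bigl|\operatorname{Cov}\bigl(Z,\,y_s\tilde g(X_s)\bigr)\bigr| \lesssim \norm{\tilde g}^\sim B\,\phi_{\cC,y_s}(i).
\end{equation*}
Ordering the big cubes lexicographically and peeling them off one at a time, absorbing the already-processed cubes into the conditioning $\sigma$-algebra and applying the above inequality to the next cube's contribution inside the exponential, one replaces the joint Laplace transform of the big-cube sums by the product of the marginal Laplace transforms, at an additive cost controlled by $M\sum_{i\ge q}\phi_{\cC,y_s}(i)$.

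The marginal Laplace transform over a single big cube $J$ with $T_J := \sum_{s\in J}Y_s$ is then treated via the elementary estimate $e^{\beta T_J}\le 1+\beta T_J+(\beta T_J)^2 e^{|\beta T_J|}$ combined with the variance bound $\operatorname{Var}(T_J)\lesssim p^N\norm{\tilde g}^\sim B^2\sum_{i\ge 0}\phi_{\cC,y_s}(i)$, itself a direct consequence of the covariance inequality above. Multiplying the per-cube estimates, adding the small-block contribution (which is $O(\beta B\norm{\tilde g}_\infty\cdot Mq(p+q)^{N-1})$ and can be made negligible by choosing $q\ll p$), and simplifying yields
\begin{equation*}
\E{\exp\bigl(\beta(S_n-\E{S_n})\bigr)} \le \exp\bigl(C\beta^2\norm{\tilde g}^\sim B^2 |I_n|\bigr)
\end{equation*}
provided $\beta B p^N$ stays of order one. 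Inserting this into the Chernoff step and optimising $\beta\asymp \epsilon/(\norm{\tilde g}^\sim B^2)$ up to the admissibility ceiling forced by $p\asymp|I_n|^{1/N}$ produces the claimed exponent $\epsilon^2|I_n|^{1/N}/(\norm{\tilde g}^\sim B^2)$.

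The principal difficulty is the decoupling step: the coefficient $\phi_{\cC,y_s}$ is defined only for a single marginal $y_s\tilde g(X_s)$ and not for a function of an entire block $(X_s)_{s\in J}$, so it cannot be applied directly to the exponential of a big-cube sum. The workaround is a point-by-point peeling inside each cube, so that the coefficient is only ever invoked against a scalar observation, with past contributions folded into the conditioning $\sigma$-algebra. Ensuring simultaneously that the per-point multiplicative factors stay bounded, so the accumulated decoupling errors do not blow up, and that $\sum_{i\ge q}\phi_{\cC,y_s}(i)$ remains small enough to be absorbed into the main Gaussian-type term, is exactly the trade-off that pins down the effective-sample-size rate $|I_n|^{1/N}$ appearing in \eqref{ExpPhiMixing}.
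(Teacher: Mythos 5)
Your plan transplants the big-block/small-block Laplace-transform machinery of Proposition~\ref{BernsteinLatticeImproved} into the weakly dependent setting, but the decoupling step on which everything rests is not licensed by the coefficient $\phi_{\cC,y_s}$. The covariance characterisation you quote controls $\operatorname{Cov}(Z,\tfrac{y_s}{B}g(X_s))$ only for $g\in\cC_1$, i.e.\ for observables that are \emph{linear} in $y_s$ and enter through a function of $X_s$ with pseudo-norm at most one. What you need to peel off inside the exponential is $e^{\beta Y_s}$ with $Y_s=y_s\tilde g(X_s)-\E{y_s\tilde g(X_s)}$: this is nonlinear in $y_s$, and $x\mapsto e^{\beta y\tilde g(x)}$ has $\norm{\cdot}^{\sim}$-norm of order $\beta B\norm{\tilde g}^{\sim}e^{\beta B\norm{\tilde g}_{\infty}}$, so rescaling it into $\cC_1$ destroys the bound. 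Expanding $e^{\beta Y_s}=1+\beta Y_s+O(\beta^2Y_s^2)$ does not help either, since $Y_s^2$ is quadratic in $y_s$ and again outside the scope of \eqref{DefPhiCV}. Your ``point-by-point peeling'' names this difficulty but does not resolve it. A second, independent obstruction is quantitative: the proposition assumes only $\sum_i\phi_{\cC,y_s}(i)<\infty$, with no decay rate. Any block scheme with corridors of width $q$ incurs an additive decoupling cost of order $(\text{number of blocks})\cdot\sum_{i\ge q}\phi_{\cC,y_s}(i)$, which under bare summability cannot be driven to be exponentially small in $|I_n|^{1/N}$; the exponential decay of $\alpha(k)$ that rescues Proposition~\ref{BernsteinLatticeImproved} has no counterpart here.

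The paper's proof avoids both problems by working with moments rather than the Laplace transform. It collapses the lattice sum onto the one-dimensional sequence of maximum-norm shells $Z_k=\sum_{\norm{s}_{\max}=k}y_s\tilde g(X_s)$, applies the $L^p$ inequality of \cite{dedecker2003new} with coefficients $b_{k,\norm{n}}$ bounded by $\norm{Z_k-\E{Z_k}}_{\infty}\sum_{i\ge k}\norm{\E{Z_i|\tilde\cM_k}-\E{Z_i}}_{\infty}$ --- a quantity that $\phi_{\cC,y_s}$ controls exactly, since it is an $L^{\infty}$ bound on conditional expectations of $\tfrac{y_s}{B}g(X_s)$ --- and then converts the resulting bound $\norm{S_n-\E{S_n}}_{\p,p}\le C(p\norm{\tilde g}^{\sim}B^2\norm{n}^{2N-1})^{1/2}$ into \eqref{ExpPhiMixing} by Markov's inequality optimised over $p\ge 2$, as in Proposition 5 of \cite{dedecker2005new}. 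The effective sample size $|I_n|^{1/N}$ arises there as the number of shells, not from a block/corridor trade-off. To repair your argument you would essentially have to rebuild this moment route, or else strengthen the hypotheses to a decay rate on $\phi_{\cC,y_s}$ and enlarge the class against which the coefficient is defined.
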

\begin{proof}
We write $\norm{\cdot}$ for the maximum norm on $\N^N$ and partition the sum $\sum_{s\in I_n} y_s \tilde{g}(X_s) $ as follows: we collect all indices with equal maximum norm and set 
$$
	Z_k = \sum_{\substack{ s\in I_n,\\ \norm{s}=k }} y_s \tilde{g}(X_s). \text{ Then } \sum_{s\in I_n} y_s \tilde{g}(X_s) = \sum_{k=1}^{\norm{n}} \sum_{\substack{ s\in I_n,\\ \norm{s}=k }} y_s \tilde{g}(X_s) = \sum_{k=1}^{\norm{n}} Z_k. 
$$
Denote by $\tilde{\cM}_k$ the $\sigma$-algebra generated by $\{Z_0,\ldots,Z_k\}$. We derive from Proposition 4 of \cite{dedecker2003new} that
\begin{align}\label{ExpIneqEq1}
		\norm{ \sum_{k=1}^{\norm{n}} Z_k -\E{Z_k}}_{\p,p} \le \left( 2p \sum_{k=1}^{\norm{n}} b_{k,\norm{n}} \right)^{1/2},
\end{align}
for $p\ge 2$ and where the coefficients $b_{k,\norm{n}}$ equal
\begin{align*}
		b_{k, \norm{n} } = \max_{k\le l\le \norm{n}} \norm{ (Z_k - \E{Z_k} ) \sum_{i=k}^l  \E{Z_i \big| \tilde{M}_k } - \E{Z_i}  }_{\p,p/2}.
\end{align*}
Note that $\norm{Z_k}_{\infty} = \cO(B k^{N-1})$. Hence, the coefficients $b_{k,\norm{n}}$ satisfy the inequality
\begin{align*}
		b_{k,\norm{n}} &\le \norm{ Z_k - \E{Z_k}  }_{\infty} \sum_{i=k}^{\norm{n}} \norm{ \E{Z_i \big| \tilde{M}_k } - \E{Z_i} }_{\infty} \le C B^2 k^{N-1} \sum_{i=k}^{\norm{n}} i^{N-1} \phi_{\cC,y_s}(i-k) \norm{\tilde{g}}^{\sim} ,
\end{align*}
where $\phi_{\cC,y_s}(i)$ is defined in \eqref{DefPhiCV}. Thus, \eqref{ExpIneqEq1} is at most (modulo a constant which depends on the lattice dimension $N$)    
\begin{align}
		&\left(2p \sum_{k=1}^{\norm{n}} B^2 k^{N-1} \sum_{i=k}^{\norm{n}} i^{N-1}\phi_{\cC,y_s}(i-k) \norm{\tilde{g}}^{\sim}   \right)^{1/2} =\left( 2p\norm{\tilde{g}}^{\sim} B^2 \norm{n}^{N-1} \sum_{i=0}^{\norm{n}-1} \phi_{\cC,y_s}(i) \sum_{k=1}^{\norm{n}-i}  (i+k)^{N-1} \right)^{1/2} \nonumber \\
		&\le C \left( 2p\norm{\tilde{g}}^{\sim} B^2 \norm{n}^{N-1} \sum_{i=0}^{\norm{n}-1} \phi_{\cC,y_s}(i)  \left( (\norm{n}+1)^{N} - (i+1)^N \right) \right)^{1/2}. \label{ExpIneqEq2}
\end{align}
Following Proposition 5 in \cite{dedecker2005new}, we obtain from this $L^p$-inequality the desired exponential inequality which is given in Equation~\eqref{ExpPhiMixing}: we obtain with Markov's inequality
\begin{align*}
		\p\left( |I_n|^{-1} \left| S_n - \E{ S_n} \right| \ge \epsilon  \right) &\le 
1 \wedge \inf_{p\ge 2} (\epsilon  |I_n|)^{-p} \, \E{  \left| S_n - \E{ S_n} \right|^p } \\
&\le 1 \wedge \inf_{p\ge 2} C_1 \, (\epsilon  |I_n|)^{-p} \, \left( C_2\, p\norm{\tilde{g}}^{\sim} B^2 \norm{n}^{2N-1 } \sum_{i=0}^{\infty} \phi_{\cC,y_s}(i)  \right)^{p/2} \\
&\le 1 \wedge \inf_{p\ge 2} C_3 \left( C_4\,  \epsilon ^{-2}   p\norm{\tilde{g}}^{\sim} B^2 |I_n|^{-1/N }   \right)^{p/2}
\end{align*}
for certain constants $C_1,\ldots,C_4$. Now, as demonstrated in \cite{dedecker2005new} this is bounded by the $\exp$-expression in Equation~\eqref{ExpPhiMixing}.
\end{proof}

The analogue of Theorem~\ref{BernsteinHilbert} for $\cC$-weakly dependent data is given in terms of a stationary random field $\{(X_s,Y_s): s\in\N^N \}$ where the $X_s$ are $\cS$-valued and the $Y_s$ are $\cH$-valued. Again, $\cH$ is a separable Hilbert space which is equipped with an orthonormal basis $\{e_j:j\in\N\}$.

\begin{theorem}\label{WeaklyCDependentHilbert}
Assume that the tail of the distribution of the $Y_s$ admits the exponential bounds as in Equation~\eqref{EqBernsteinHilbert0}.
Set $y_{j,s} \coloneqq \scalar{Y_s}{e_j}$ and $y_{j,s}^{(B)} \coloneqq \min(B,\max(-B,y_{j,s}))$ for $B>0$.  Moreover, assume that 
\begin{align}\label{UniBddPhi}
		\sup_{j\in\N} \sup_{B>0} \sum_{i\in\N} \phi_{\cC,y^{(B)}_{j,s}}(i) < \infty,
\end{align}
where the $\phi_{\cC,y^{(B)}_{j,s}}$ are defined in \eqref{DefPhiCV}. Let $\tilde{g}\in\cC_1$ and set $S_n =  \sum_{s\in I_n} Y_s \tilde{g}(X_s) \in \cH$. Then
\begin{align}
		&\p\left( |I_n|^{-1} \hnorm{ S_n - \E{ S_n } } \ge \epsilon \right) \nonumber\\
		\begin{split}\label{WeaklyCDepEq0}
		&\le A_1 \left[ \epsilon^{-2} + m + m^{(4+5\gamma)/(4+2\gamma)} \epsilon^{-3\gamma/(2+\gamma)} \left(|I_n|^{1/N} \left(\norm{\tilde{g}}^{\sim}\right)^{-1} \right)^{(1-\gamma)/(2+\gamma)} \right] \\
		&\quad\cdot \exp\left\{ - A_2 \left( \epsilon^2 |I_n|^{1/N}   \left(\norm{\tilde{g}}^{\sim}\right)^{-1} \right)^{\gamma/(2+2\gamma)}  \right\},
\end{split}\end{align}
where $m= \left(\epsilon^2 |I_n|^{1/N}  \left(\norm{\tilde{g}}^{\sim}\right)^{-1} \right)^{\gamma/(2+2\gamma)}$. In particular, if $\gamma\ge 1$,
\begin{align*}
		\p\left( |I_n|^{-1} \hnorm{ S_n - \E{ S_n } } \ge \epsilon \right) &\le \left[\epsilon^{-2}+\left(\epsilon^2 |I_n|^{1/N} \left(\norm{\tilde{g}}^{\sim}\right)^{-1} \right)^{1/4} + \left(\epsilon^2 |I_n|^{1/N} \left(\norm{\tilde{g}}^{\sim}\right)^{-1} \right)^{9/24} \epsilon ^{-1} \right] \\
		&\quad \cdot \exp\left\{ - A_2 \left( \epsilon^2 |I_n|^{1/N}   \left(\norm{\tilde{g}}^{\sim}\right)^{-1} \right)^{1/4}  \right\}.
\end{align*}
\end{theorem}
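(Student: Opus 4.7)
The strategy mirrors Theorem~\ref{BernsteinHilbert}: project $S_n - \E{S_n}$ onto the orthonormal basis $\{e_j\}$, treat the first $m$ coordinates with a real-valued exponential inequality (now Proposition~\ref{WeaklyCDependent}), and use a crude second-moment bound for the tail $j > m$. The new wrinkle compared to the $\alpha$-mixing setting is that Proposition~\ref{WeaklyCDependent} requires the scalar factors $y_{j,s} = \scalar{Y_s}{e_j}$ to be uniformly bounded, while $Y_s$ only has an exponential Hilbert-norm tail; this is resolved by a truncation step, which is precisely the reason that hypothesis \eqref{UniBddPhi} is stated in terms of $y_{j,s}^{(B)}$.

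Concretely, setting $Z_s = Y_s \tilde{g}(X_s)$, I would start from the same decomposition as \eqref{EqBernsteinHilbert1},
\begin{align*}
    \p(|I_n|^{-1}\hnorm{S_n - \E{S_n}} \ge \epsilon)
    &\le m \max_{j\le m}\p\!\left(|\scalar{S_n-\E{S_n}}{e_j}| \ge \tfrac{|I_n|\epsilon}{2\sqrt m}\right) \\
    &\quad + \left(\tfrac{2}{\epsilon}\right)^2 \sum_{j>m}\E{\scalar{Z_{e_N}}{e_j}^2},
\end{align*}
where the tail sum is controlled geometrically by the first condition of \eqref{EqBernsteinHilbert0} and yields the $\epsilon^{-2}$ term of \eqref{WeaklyCDepEq0}. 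For each $j \le m$, I would truncate $y_{j,s}$ at a level $B$ (to be chosen) by writing $y_{j,s} = y_{j,s}^{(B)} + r_{j,s}^{(B)}$ and splitting the scalar projection accordingly. Proposition~\ref{WeaklyCDependent} applied to $\{(X_s, y_{j,s}^{(B)})\}$ gives the Gaussian-type bound
$$ A_1 \exp\!\Bigl( -A_2 \epsilon^2 |I_n|^{1/N} \bigl/ (m \norm{\tilde{g}}^{\sim} B^2) \Bigr), $$
where hypothesis \eqref{UniBddPhi} is exactly what ensures that the constants $A_1, A_2$ can be chosen uniformly in $j$ and $B$. The residual contribution is handled by the inclusion $\{r_{j,s}^{(B)} \neq 0\} \subseteq \{\hnorm{Y_s} > B\}$ combined with the second part of \eqref{EqBernsteinHilbert0}, which gives a union bound $|I_n| \kappa_0 \exp(-\kappa_1 B^\gamma)$, while the centering bias $|\E{r_{j,s}^{(B)}\tilde{g}(X_s)}| \le \norm{\tilde{g}}_\infty \E{\hnorm{Y_s}\mathbbm{1}\{\hnorm{Y_s} > B\}}$ is $O(|I_n| B e^{-\kappa_1 B^\gamma})$ and is absorbed into the constants once $B$ is taken in the relevant range.

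The optimization step is then to equalize the three exponents $\epsilon^2 |I_n|^{1/N}/(m \norm{\tilde{g}}^\sim B^2)$, $B^\gamma$ and $d_1 m$. Setting $B^\gamma \asymp m$ and balancing with the Bernstein exponent forces $m \asymp (\epsilon^2 |I_n|^{1/N}/\norm{\tilde{g}}^\sim)^{\gamma/(2+2\gamma)}$, which is exactly the rate appearing in the exponent of \eqref{WeaklyCDepEq0}. The polynomial prefactor is then obtained by collecting the $m$ coming from the maximum over the first $m$ coordinates, the $\epsilon^{-2}$ from the Markov tail step, and an extra $m^{(4+5\gamma)/(4+2\gamma)} \epsilon^{-3\gamma/(2+\gamma)} (|I_n|^{1/N}/\norm{\tilde{g}}^\sim)^{(1-\gamma)/(2+\gamma)}$ factor that comes from expressing the truncation/bias polynomial dependence back in terms of $\epsilon$ and $|I_n|$ after substituting the optimal $B$ and $m$. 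The special case $\gamma \ge 1$ then follows by direct substitution, yielding the exponent $\gamma/(2+2\gamma) = 1/4$ and the simplified prefactor displayed at the end of the statement. The main technical obstacle is the bookkeeping for hypothesis \eqref{UniBddPhi}: one must ensure that the truncation map $y \mapsto y^{(B)}$ (and, implicitly, the choice of pseudo-norm $\norm{\cdot}^\sim$) preserves the uniform summability of the $\cC$-weak dependence coefficients, since without this uniformity the constants $A_1, A_2$ supplied by Proposition~\ref{WeaklyCDependent} would blow up with $j$ or $B$ and the whole argument would collapse.
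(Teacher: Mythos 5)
Your proposal follows essentially the same route as the paper: the coordinate-wise decomposition of \eqref{EqBernsteinHilbert1}, truncation of $y_{j,s}$ at level $B$ so that Proposition~\ref{WeaklyCDependent} applies (with hypothesis \eqref{UniBddPhi} supplying uniformity in $j$ and $B$), and the same two-stage optimization over $B$ and $m$ leading to $m \asymp (\epsilon^2 |I_n|^{1/N}(\norm{\tilde{g}}^{\sim})^{-1})^{\gamma/(2+2\gamma)}$. The only deviation is cosmetic: you control the truncation remainder by a union bound over $s\in I_n$, giving an extra factor $|I_n|$ in the polynomial prefactor, whereas the paper bounds it via a Markov-type argument yielding the $B^{1-\gamma}\epsilon^{-1}m^{1/2}\exp(-A_2B^{\gamma})$ term; this does not change the exponential rate.
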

\begin{proof}
We proceed as in the proof of Theorem~\ref{BernsteinHilbert} and use the result from Proposition~\ref{WeaklyCDependent}. After splitting the sum in a finite-dimensional part and an infinite-dimensional remainder, we end up in a constellation as in Equation~\eqref{EqBernsteinHilbert1}:
\begin{align*}
\p\left( |I_{n}|^{-1}  \hnorm{S_{n}- \E{ S_n}} \ge \epsilon \right) 	&\le m\cdot \max_{1\le j \le m} \p\left( |\scalar{S_{n}}{e_j} | \ge \frac{ |I_{n}|  \epsilon}{2 \sqrt{m} } \right) + \left( \frac{2}{\epsilon } \right)^2 \sum_{j=m+1}^{\infty}\E{\scalar{Z_{e_N}}{e_j}}^2.
		\end{align*}
The finite-dimensional part needs to be split in a part bounded by a constant $B$ as well as a positive and negative remainder. More precisely, we write $y_{j,s} = y_{j,s}^{(B)} + \max( y_{j,s}-B,0) + \min( y_{j,s}+B,0)$. Hence, if we use additionally the fact that the tail of the distribution of the $y_{j,s}$ is uniformly bounded, we obtain for the finite-dimensional part (similar as in Equation \eqref{EqBernsteinHilbert2} and using Proposition~\ref{WeaklyCDependent}) the bound
\begin{align*}
		\p\left(	|\scalar{S_{n}}{e_j} |\ge \frac{ \epsilon  |I_n| }{2\sqrt{m}}	\right) &\le A_1 \inf_{B>0} \Bigg\{ B^{1-\gamma} \epsilon^{-1} m^{1/2} \exp(-A_2 B^{\gamma})  + \exp\left(-A_2 \epsilon^2 |I_n|^{1/N} \left(\norm{\tilde{g}}^{\sim}\right)^{-1} B^{-2} m^{-1} \right) \Bigg\}.
\end{align*}
Note that the uniform boundedness of the weak dependence coefficients from Equation~\eqref{UniBddPhi} is necessary in order to apply Proposition~\ref{WeaklyCDependent} uniformly in $j$. Consequently, the choice $B=\left( \epsilon^2 |I_n|^{1/N} \left(\norm{\tilde{g}}^{\sim}\right)^{-1} m^{-1} \right)^{1/(2+\gamma)}$ yields 
\begin{align*}
		\p\left( |I_{n}|^{-1} \hnorm{S_{n}- \E{ S_n}} \ge \epsilon \right) 
		&\le A_1 \inf_{m\in\N} \Biggl\{	m \left[1+
		\left( |I_n|^{1/N} \left(\norm{\tilde{g}}^{\sim}\right)^{-1} \right)^{(1-\gamma)/(2+\gamma)} m^{3\gamma/(2(2+\gamma))} \epsilon ^{-3\gamma/(2+\gamma)} 	
			\right]\\
		&\quad \cdot \exp\left[	- A_2 \left( \epsilon^2 |I_n|^{1/N}  \left(\norm{\tilde{g}}^{\sim}\right)^{-1} \right)^{\gamma/(2+\gamma)} m^{-\gamma/(2+\gamma)} \right] + \left(\frac{2}{\epsilon}\right)^2\exp( -A_2 m ) \Biggl \}.
\end{align*}
Choosing $m$ proportional to $(\epsilon^2 |I_n|^{1/N} \left(\norm{\tilde{g}}^{\sim}\right)^{-1})^{\gamma/(2+2\gamma)}$ yields the rate in \eqref{WeaklyCDepEq0}.
\end{proof}

\section{Applications in the functional kernel regression model}\label{Section_Application}

In this section, let $\mathcal{D}$ be a convex and compact subset of $\R^d$. The Hilbert space $\cH$ is given by the function space $L^2( \cD, \cB(\cD), \nu)$ over the field $\R$, where $\nu$ is a finite measure, e.g., the Lebesgue measure or a probability measure. The inner product on $\cH$ is $\scalar{x}{y} = \int_\mathcal{D} xy \intd{\nu}$. We assume that $\cS$ is a superset of the continuous functions on $\cD$ and a subset of $\cH$, i.e., $C^0(\cD) \subseteq \cS \subseteq \cH$.

Consider a pseudo-metric $d$ on $\cS$ which satisfies $d(x,y) \le \hnorm{x-y}= (\int_\cD |x-y|^2\intd{\nu})^{1/2}$ for all $x,y \in \cS$. An example for $d$ would be a projection-based pseudo-metric. We study the strictly stationary process $( (X_s,Y_s) : s\in\Z^N)$, $N\in\N_+$, where $X_s$ takes in $\cS$ and $Y_s$ takes values in $\cH$. The process satisfies the functional regression model
\begin{align}\label{GenX}
		Y_s = \Psi(X_s) + \epsilon_s,\quad s\in\Z^N
		\end{align}
where the error terms $\epsilon_s$ are $\cH$-valued with $\E{ \epsilon_s | X_s } =0$.

We estimate the operator $\Psi: \cS \rightarrow\cH $ with the methods from the kernel regression framework of \cite{ferraty2004nonparametric}, \cite{ferraty_nonparametric_2007} and \cite{ferraty2012regression}. An important variable in this model is the small ball probability function which is defined with the help of $d$ as $F_x(h) = \p( d(X_s,x)\le h )$, for $h\ge0$. Let $K$ be a kernel function; we write $K_h \coloneqq K( \cdot /h)$ and estimate the operator $\Psi$ pointwise by
\begin{align}\begin{split}\label{DefHatPsi}
		&\hat{\Psi}_h(x) \coloneqq \frac{ \hat{g}_h(x) }{ \hat{f}_h(x) } \in\cH, \quad \text{ for } x\in \cS, \text{ where }\\
		&\qquad\qquad \hat{f}_h (x) \coloneqq (|I_n| F_x(h))^{-1} \sum_{s\in I_n} K_h (d(X_s,x) ) \in \R \text{ and }\\
		 &\qquad\qquad\qquad\qquad \hat{g}_h (x)\coloneqq (|I_n| F_x(h))^{-1} \sum_{s\in I_n} Y_s K_h ( d(X_s,x) ) \in \cH. 
\end{split}\end{align}

$\cH$ is equipped with an orthonormal basis $\{e_j:j\in\N\}$. Denote by $\psi_{j} \coloneqq \scalar{\Psi(\cdot)}{e_j}$ the $j$-th coordinate of the operator $\Psi$ w.r.t.\ the orthonormal basis and by $y_{j,s} \coloneqq \scalar{Y_s}{e_j}$ the $j$-th coordinate of the process $Y_s$. Set $y_{j,s}^{(B)} \coloneqq \min( B, \max(-B,y_{j,s}))$ for $B\ge0$. Moreover, define $\vartheta_{x,j}(s) \coloneqq \E{ \psi_j(X_s)-\psi_j(x) | d(X_s,x)=s }$ for $j\in\N$ and $x\in\cS$. We write $\norm{x}_{\infty}$ for the essential supremum of a function $x$ on $\mathcal{D}$ w.r.t.\ $\nu$ and make the following assumptions:

\begin{enumerate}

\item \label{Condition1} $\Psi\colon \cS \to\cH$ is uniformly H{\"o}lder continuous of order $r$ w.r.t.\ $\hnorm{\cdot}$, i.e., $\hnorm{\Psi(x)-\Psi(y)}\le L_\Psi \hnorm{x-y}^r$ for some $r\in(0,1]$.
For some $\delta>0$, all $0\le u \le \delta$, all $j\in\N$ and all $x\in\cS$, $\vartheta_{x,j}(0)=0$, $\vartheta'_{x,j}(u)$ exists and $\vartheta'_{x,j}(u)$ is uniformly H{\"o}lder continuous of order $r$, i.e., there is a $0<L_{x,j}< \infty$ such that $|\vartheta'_{x,j}(u)-\vartheta'_{x,j}(0)|\le L_{x,j} u^r$ for all $0\le u\le \delta$. Additionally, $\sup_{x\in\cS} \sum_{j\in\N} \vartheta'_{x,j}(0)^2 < \infty$ and $\sup_{x\in\cS} \sum_{j\in\N} L_{x,j}^2 <\infty$.

\item \label{Condition2} the kernel $K$ has support in $[0,1]$ and has a continuous derivative $K'\le 0$. The Lipschitz constant of $K$ on [0,1] is denoted by $L_K$, i.e., $\left| K(u)-K(v)\right| \le L_K |u-v|$ for all $u,v\in [0,1]$.

\item \label{Condition3} K(1) = 0, which implies that the kernel function is Lipschitz continuous on $\R_+$.

\item \label{Condition4}
	the small ball probability $F_x(h)=\p(d(X_s,x)\le h)$ is positive for all $h>0$ and for all $x\in \cS$. The limit of the quotient $\tau_x(u)\coloneqq \lim_{h\downarrow 0 }  F_x(hu) /F_x(h)$
exists for all $u\in[0,1]$ and all $x\in\cS$ and it is uniform:
\begin{align}\label{EqSmallBall}
		\lim_{h\downarrow 0} \;\sup_{x\in\cS}\; \sup_{u\in[0,1]} \left| \frac{ F_x(hu)}{F_x(h)} - \tau_x(u) \right| = 0.
\end{align}

\item \label{Condition5} $M_x \coloneqq K(1)-\int_0^1 K'(u)\tau_x(u)\,\intd{u} > 0$ for all $x\in\cS$ and $\inf_{x\in\cS}M_x >0$.

\item\label{Condition6}  there is a $\delta>0$ such that the small ball probability quotient 
\begin{align*}
		\cS \times[0,1] \ni (z,u) \mapsto  F_z(hu)/ F_x(h)
\end{align*}
is Lipschitz continuous for each fixed point $x\in\cS$ with Lipschitz constant $L_{x} $ which is uniform in $h$ for $h\le \delta$.

\item \label{Condition7} the tail of the distribution of the $Y_s$ decays exponentially, i.e., $\p( \hnorm{Y_s} \ge z ) \le \kappa_0 \exp(-\kappa_1 z^{\gamma})$ for some $\gamma \ge 1$. Furthermore, there are positive constants $d_0,d_1$ such that
$$ \E{ \scalar{ Y_s}{e_j}^2 } \le d_0 \exp(-d_1 j). $$

\item \label{Condition8} set $\tilde{\vartheta}_x(u) = \E{ \norm{Y_s} | d(X_s,x)=u }$. Then $\sup_{x\in\cS, \norm{x}_{\infty}\le R} \tilde{\vartheta}_x(0) = \cO(R^r)$. Moreover, there is a $\delta>0$ such that for all $x\in\cS$ and $0\le u\le \delta$ the derivative $\tilde{\vartheta}'_x(u)$ exists and $\sup_{x\in\cS, u\le \delta} |\tilde{\vartheta}'_x(u)| < \infty$.

\item \label{Condition9} the process $\{ (X_s,Y_s): s\in \N^N \}$ is strongly spatial mixing with exponentially decreasing mixing coefficients such that $\alpha(k) \le c_0 \exp( -c_1 k)$ for $\alpha$ defined as in Equation~\eqref{StrongSpatialMixing}.

\item \label{Condition10} the pseudo-norm on $\cC$ from \eqref{DefinitionCC} is defined by
$
			\norm{g}^{\sim} \coloneqq \sup_{x,y\in \cS, x\neq y}  |g(x)-g(y)| / \hnorm{x-y}.
$
The process $(X,Y)$ is uniformly $\cC$-weakly dependent in the sense that the coordinate processes of the $Y_s$ satisfy
$$
		\sup_{j\in\N} \sup_{B>0} \sum_{i\in\N} \phi_{\cC,y^{(B)}_{j,s}}(i) < \infty \text{ and } \sum_{i\in \N } \phi_{\cC}(i) < \infty.
$$

\end{enumerate}

Condition \ref{Condition1} ensures that the regression operator is uniformly continuous on $\cS\subseteq \cH$, w.r.t.\ the norm $\hnorm{\cdot}$ which is stronger than the pseudo-metric $d$. The requirement on the conditional expectation functions is not uncommon, a similar assumption is made in \cite{ferraty2012regression}. It ensures in particular that the conditional expectation of the difference of the full operator $\Psi(X_s)-\Psi(x)$ admits a meaningful first order expansion w.r.t.\ $d(X_s,x)$. Condition~\ref{Condition2} contains standard assumptions on the kernel, see \cite{ferraty_nonparametric_2007}. For the concept of weak dependence, we need in the following that the kernel function $K$ is continuous, thus, in this case Condition~\ref{Condition3} is additionally necessary.

Condition~\ref{Condition4} can be motivated by the following observation: since the underlying Hilbert space is a function space, one has in many applications that for a point $x$ in the Hilbert space $\p( \norm{ X_s - x} \le h ) \sim C(x) \p( \norm{X_s} \le h)$ for $h \downarrow 0$. For further details see e.g. \cite{ferraty2006estimating}, \cite{ferraty_nonparametric_2007} and \cite{ferraty2012regression}. 

The positivity of the moments $M_x$ in Condition~\ref{Condition5} is technical and guaranteed if $K(1)>0$. In the same way, Conditions~\ref{Condition6} to ~\ref{Condition8} guarantee certain technical properties of the estimator $\hat{\Psi}$ in the subsequent proofs. Condition~\ref{Condition9} is not unusual if we assume that the data are $\alpha$-mixing and is also mentioned in \cite{ferraty2004nonparametric}. In the same way, Condition~\ref{Condition10} guarantees a solution if the data are $\cC$-weakly dependent.

Define on $C^0(\cD)$ the norm
\begin{align}\label{NormC0}
 \norm{x}_{1,C^0(\cD)} \coloneqq \sup_{u\in \mathcal{D} } |x(u)| + \sup_{u,v\in \mathcal{D}, u\neq v } \frac{ \left| x(u) - x(v) \right|}{ \norm{u-v} }.
\end{align}
Consider for $R>0$ the $\delta$-covering number $ N(\cG(R),\delta,\hnorm{\cdot})$ of the set $	\cG(R) \coloneqq \left \{ x\in C^0(\mathcal{D}): \norm{x}_{1,C^0(\cD)} \le R \right\}$ w.r.t.\ the norm $\hnorm{\cdot}$. Then the following is well known:
\begin{lemma}\label{TotallyBoundednessFunctionalCase}
The set $\cG(R)$ is totally bounded and there is a constant $C$ which only depends on $d$ such that the covering number w.r.t.\ the $\hnorm{\cdot}$-norm on the function space $\cH$ satisfies 
$ \log N(\cG(R),\delta,\hnorm{\cdot}) \le C \lambda( \mathcal{D} ^1 ) ( \sqrt{\nu(\mathcal{D})}R  / \delta  )^{d}$, 
where $\lambda(\cD^1) = \{u\in\R^d: \; \exists v\in\cD:\, \norm{u-v}_{\infty}\le 1\}$.
\end{lemma}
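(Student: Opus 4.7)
The plan is to combine Arzelà--Ascoli with the classical Kolmogorov--Tikhomirov lattice count for Lipschitz balls, transferring to $\hnorm{\cdot}$ through the elementary inequality $\hnorm{x}\le\sqrt{\nu(\mathcal{D})}\,\norm{x}_\infty$. Every $x\in\cG(R)$ is bounded by $R$ and $R$-Lipschitz on the compact set $\mathcal{D}$, so $\cG(R)$ is pointwise bounded and equicontinuous; Arzelà--Ascoli then yields a finite sup-norm cover at every scale, which is in particular a finite $\hnorm{\cdot}$-cover, proving total boundedness. Because a sup-norm cover at radius $\delta/\sqrt{\nu(\mathcal{D})}$ is automatically an $\hnorm{\cdot}$-cover at radius $\delta$, it suffices to bound $N(\cG(R),\eta,\norm{\cdot}_\infty)$ for $\eta\coloneqq\delta/\sqrt{\nu(\mathcal{D})}$.

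For the sup-norm count I would cover $\mathcal{D}$ by its intersection with a regular cubic lattice of spacing $\eta/R$, obtaining a net $\{u_1,\ldots,u_M\}$ with $M\le C\lambda(\mathcal{D}^{1})(R/\eta)^d$ via the standard volume bound; the lattice supplies a canonical enumeration in which every $u_k$, $k\ge 2$, has a lattice neighbour $u_{j(k)}$ with $\norm{u_k-u_{j(k)}}\le \eta/R$ and $j(k)<k$. Next, quantize $x(u_1)\in[-R,R]$ on a grid of spacing $\eta$, yielding at most $2R/\eta+1$ candidates. For each $k\ge 2$, the Lipschitz bound $|x(u_k)-x(u_{j(k)})|\le\eta$ traps $x(u_k)$ inside an interval of length $2\eta$ centred at the already-quantized value of $x(u_{j(k)})$, so $x(u_k)$ has at most $O(1)$ quantized candidates. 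The total number of quantized value sequences is therefore bounded by $(2R/\eta+1)\cdot K^{M-1}\le\exp(C'M)$. To each sequence I associate the Voronoi step function on the net as a cover centre; the estimate $|x(u)-x_0(u)|\le R\norm{u-u_{i(u)}}+\eta\le 2\eta$, where $u_{i(u)}$ is the nearest net point to $u$, shows that these centres form an $O(\eta)$-cover of $\cG(R)$ in sup norm. Substituting $\eta=\delta/\sqrt{\nu(\mathcal{D})}$ and absorbing constants into $C$ gives the claimed bound.

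The only delicate step is the value-quantization: a naive independent quantization of each $x(u_k)$ inside $[-R,R]$ at scale $\eta$ would inflate the exponent by a factor $\log(R/\eta)$. Exploiting the Lipschitz constraint between lattice-adjacent net points to bring the per-point branching factor down to a dimension-only constant is what yields the clean polynomial bound $(\sqrt{\nu(\mathcal{D})}R/\delta)^d$. The connectedness-of-$\mathcal{D}$ issue that one worries about with general $\eta$-packings disappears entirely in the lattice construction, which is why I prefer to work on a deterministic grid rather than a maximal packing.
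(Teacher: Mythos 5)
Your proof is correct, and the top-level reduction is the same as the paper's: you pass from the $\hnorm{\cdot}$-covering problem to a sup-norm covering problem via $\hnorm{x}\le\sqrt{\nu(\cD)}\,\norm{x}_\infty$, which is exactly the paper's first step. Where you diverge is in how the sup-norm entropy of the Lipschitz ball is obtained: the paper simply rescales $\cG(R)$ to $\cG(1)$ and cites Theorem 2.7.1 of van der Vaart and Wellner, whereas you re-derive that Kolmogorov--Tikhomirov bound from scratch by a lattice discretization of $\cD$ combined with chained value-quantization (branching factor $O(1)$ per net point thanks to the Lipschitz constraint between adjacent net points). Your route is self-contained and makes transparent where the exponent $d$ and the factor $\lambda(\cD^1)$ come from, at the cost of length; the paper's route is a two-line reduction to a standard reference. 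One point you dismiss too quickly: the existence of an enumeration of the net in which each $u_k$ has an earlier lattice neighbour requires the adjacency graph of the lattice cells meeting $\cD$ to be connected, and this is \emph{not} automatic for a general compact $\cD$ (for a disconnected or badly shaped domain you would have to quantize a fresh starting value on each component, which reintroduces the $\log(R/\eta)$ factor you are trying to avoid). It does hold here because $\cD$ is assumed convex in Section~\ref{Section_Application} (a segment joining points of $\cD$ in two cells traverses a chain of pairwise adjacent cells), so your argument is sound under the paper's standing hypotheses, but the justification should be stated rather than asserted.
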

\begin{proof}
By Theorem 2.7.1 in \cite{van2013weak} the logarithm of the covering number of $\cG(1)$ w.r.t.\ the supremum norm can be bounded by $\lambda( \mathcal{D} ^1 ) (1 / \delta )^d$ times a constant which only depends on $d$. Now, note that the covering number of $\cG(R)$ w.r.t.\ the 2-norm on $\mathcal{D}$ can be bounded by the $\delta/\sqrt{\nu(\mathcal{D})}$-covering number of $\cG(R)$ w.r.t.\ the supremum norm on $\mathcal{D}$ which in turn can be bounded by the $\delta/(R\sqrt{\nu(\mathcal{D})})$-covering number of $\cG(1)$ w.r.t.\ the supremum norm on $\mathcal{D}$. This finishes the proof.
\end{proof}

\begin{lemma}\label{UnifConvHatF}
$\lim_{h\rightarrow 0} \sup\{ | \E{ \Kh{d(X_0,x)} F_x(h)^{-1} } - M_x |: x\in \cS \} = 0$. In particular, $\E{\hat{f}_h(x) } \rightarrow M_x$ uniformly in $x\in\cS$ for any choice of the bandwidth $h=h_n$ which vanishes if $n$ converges to infinity.
\end{lemma}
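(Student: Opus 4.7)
The plan is to exploit the nonpositivity of $K'$ together with Condition~\ref{Condition3} ($K(1)=0$) to express $\E{K_h(d(X_0,x))}$ as a weighted integral of small-ball probabilities, and then read off the limit from Condition~\ref{Condition4}. Since $K$ has support in $[0,1]$ and $K(u)=-\int_u^1 K'(t)\intd{t}$ on $[0,1]$, Tonelli's theorem (applicable because $-K'\ge 0$) yields
\begin{align*}
\E{K_h(d(X_0,x))}
=-\E{\int_{d(X_0,x)/h}^{1} K'(t)\,\1{d(X_0,x)\le h}\intd{t}}
=-\int_0^1 K'(t)F_x(ht)\intd{t}.
\end{align*}
Dividing by $F_x(h)$ and subtracting the analogous representation $M_x=-\int_0^1 K'(t)\tau_x(t)\intd{t}$ (again using $K(1)=0$) yields the clean identity
\begin{align*}
\frac{\E{K_h(d(X_0,x))}}{F_x(h)}-M_x
=\int_0^1(-K'(t))\left[\frac{F_x(ht)}{F_x(h)}-\tau_x(t)\right]\intd{t}.
\end{align*}

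The second step is to take the supremum over $\cS$. Bounding the integrand in absolute value and using $\int_0^1|K'(t)|\intd{t}=K(0)-K(1)=K(0)<\infty$ gives
\begin{align*}
\sup_{x\in\cS}\left|\frac{\E{K_h(d(X_0,x))}}{F_x(h)}-M_x\right|
\le K(0)\,\sup_{x\in\cS}\sup_{u\in[0,1]}\left|\frac{F_x(hu)}{F_x(h)}-\tau_x(u)\right|,
\end{align*}
and the uniform small-ball convergence supplied by Condition~\ref{Condition4}, Equation~\eqref{EqSmallBall}, forces the right-hand side to vanish as $h\downarrow 0$. This is the first claim. The second claim follows immediately by stationarity: $\E{\hat f_h(x)}=(|I_n|F_x(h))^{-1}\sum_{s\in I_n}\E{K_h(d(X_s,x))}$ reduces to $\E{K_h(d(X_0,x))}/F_x(h)$, independently of $n$, so it inherits the uniform limit $M_x$ from the first part for any vanishing bandwidth $h=h_n$.

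I expect essentially no real obstacle in this proof: the integration-by-parts representation of $K$ is one-dimensional, Tonelli is free because $-K'\ge 0$, and Condition~\ref{Condition4} hands us the uniform limit directly. The only point that deserves care is the correct treatment of the indicator $\1{d(X_0,x)\le h}$ inside the exchanged integral, which is what converts the expectation into a clean integral against $F_x(ht)$ over the full interval $[0,1]$ rather than over a random subinterval.
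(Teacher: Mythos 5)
Your proof is correct and follows essentially the same route as the paper: the paper simply quotes the expansion $\E{\Kh{d(X_0,x)}/F_x(h)}-M_x=\int_0^1 K'(u)\bigl(F_x(hu)/F_x(h)-\tau_x(u)\bigr)\intd{u}$ from \cite{ferraty_nonparametric_2007} and then bounds it by $\int_0^1|K'(u)|\intd{u}$ times the uniform small-ball discrepancy of Condition~\ref{Condition4}, exactly as you do after deriving that identity yourself via Tonelli. One small remark: you do not actually need Condition~\ref{Condition3} ($K(1)=0$), which is not assumed in the strong-mixing application of this lemma; writing $K(u)=K(1)-\int_u^1K'(t)\intd{t}$ instead, the $K(1)$ terms cancel against the $K(1)$ in the definition of $M_x$ and the same identity and bound follow.
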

\begin{proof}
The claim follows from the assumption of the uniform convergence of the small ball probability and the expansion provided in \cite{ferraty_nonparametric_2007}. Let $x\in\cS$ be fixed, then
\begin{align*}
		\left| \E{ \frac{\Kh{d(X_0,x)}}{F_x(h) } }-M_x \right| &= \left| \int_0^1 K'(u) \left( \frac{ F_x(hu)}{F_x(h)} - \tau_x(u) \right) \intd{u} \right| \\
		&\le \int_0^1 |K'(u)|\intd{u}\; \sup_{x\in\cS} \sup_{u\in[0,1]} \left|  \frac{ F_x(hu)}{F_x(h)} - \tau_x(u)  \right| \rightarrow 0.
\end{align*}
The last inequality is independent of $x\in\cS$.
\end{proof}

We give two results on the consistency of the estimator $\hat{\Psi}$. The first one applies to the case where the data is strongly spatial mixing, the second one applies to $\cC$-weakly dependent data.

For both results the number $\inf_{x\in\cG(R)} F_x(h)$ will be of interest. It depends on the bandwidth $h$, the radius $R$ of the set $\cG(R)$ and on the spatial process $X$ itself. So $R$, $\inf_{x\in\cG(R)} F_x(h)$ and $h$ can be mutually dependent in a complex way which is of particular interest if $R$ converges to infinity. This has also consequences for the proofs of the upcoming Theorem~\ref{UniformConvHatPsi} and Theorem~\ref{UniformConvHatPsiV} where we need to construct a $\delta$-covering of the set of functions $\cG(R)$ which depends on the radius $R$. To avoid this dependence, we choose $\delta$ only to depend on the sample size $|I_n|$ and not on the numbers $R$, $\inf_{x\in\cG(R)} F_x(h)$ and $h$.

\begin{theorem}[Uniform convergence under strong spatial mixing conditions]\label{UniformConvHatPsi}
Let Conditions (\ref{Condition1}), (\ref{Condition2}) and (\ref{Condition4}) - (\ref{Condition9}) be satisfied. Let $(n_k:k\in\N)$ be a sequence in $\N^N$ which converges to infinity. Let $R_n$ be a real-valued sequence which has a limit in $(0,\infty]$ and assume that the bandwidth $h=h_n$ converges to zero such that
 $$
\frac{ R_n^{5d/2} (\log |I_n|)^7 }{  |I_n|^{1/N\cdot 2/(5d+2)} \, \inf_{x\in\cG(R_n) } F_{x} (h) } \rightarrow 0 \text{ and } \frac{R_n^r}{ |I_n|^{1/N \cdot 2/(5d+2) } \, h }  \rightarrow 0.
$$
Then 
$$
		\sup_{x\in \cG(R_n) } \hnorm{ \hat{\Psi}_h(x) - \Psi(x) }  = \cO\left( \frac{ R_n^{5d/2} (\log |I_n|)^{7} }{ |I_n|^{1/N \cdot 2/(5d+2) } \, \inf_{x\in\cG(R_n) } F_x(h) } \right) + \cO\left(\frac{R_n^r}{|I_n|^{1/N \cdot 2/(5d+2) } \,h  } \right) + \cO\left(		h^r \right) \quad a.s.
$$
\end{theorem}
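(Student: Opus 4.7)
The plan is to start from the identity
\begin{equation*}
\hat{\Psi}_h(x)-\Psi(x) = \frac{\hat{g}_h(x)-\E{\hat{g}_h(x)}}{\hat{f}_h(x)} - \frac{\Psi(x)[\hat{f}_h(x)-\E{\hat{f}_h(x)}]}{\hat{f}_h(x)} + \frac{\E{\hat{g}_h(x)}-\Psi(x)\E{\hat{f}_h(x)}}{\hat{f}_h(x)}
\end{equation*}
and to control each of the three summands on $\cG(R_n)$ separately. The deterministic bias (third term) is handled first: writing $\E{\hat{g}_h(x)}-\Psi(x)\E{\hat{f}_h(x)} = F_x(h)^{-1}\E{\Kh{d(X_0,x)}(\Psi(X_0)-\Psi(x))}$, expanding $\vartheta_{x,j}(u)=\vartheta'_{x,j}(0)u + O(u^{1+r})$ coordinatewise by Condition~\ref{Condition1}, and using the uniform small-ball expansion from the proof of Lemma~\ref{UnifConvHatF}, one obtains $\sup_{x\in\cG(R_n)}\hnorm{\E{\hat{g}_h(x)}-\Psi(x)\E{\hat{f}_h(x)}}=O(h^r)$, which accounts for the third contribution in the stated rate.

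Next I would show that $\inf_{x\in\cG(R_n)}\hat{f}_h(x)$ stays bounded below by a positive constant almost surely. Lemma~\ref{UnifConvHatF} together with Condition~\ref{Condition5} yields $\E{\hat{f}_h(x)}\to M_x\ge \inf_{x\in\cS}M_x>0$; pointwise concentration of $\hat{f}_h(x)-\E{\hat{f}_h(x)}$ follows from Corollary~\ref{CorBernsteinImproved} applied to the centered real-valued summands $\Kh{d(X_s,x)}/F_x(h)$, which are bounded by $K(0)/\eta_n$ with $\eta_n\coloneqq\inf_{x\in\cG(R_n)}F_x(h)$. Uniformity on $\cG(R_n)$ comes from the covering argument used below for $\hat{g}_h$.

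The main effort is the stochastic term $\hat{g}_h(x)-\E{\hat{g}_h(x)}$. I would fix a $\delta$-net $\{x_1,\ldots,x_{N_\delta}\}\subset\cG(R_n)$ with $\log N_\delta \le C(R_n/\delta)^d$ by Lemma~\ref{TotallyBoundednessFunctionalCase}. At each net point $x_i$, the sum $\sum_{s\in I_n}[\Kh{d(X_s,x_i)}Y_s/F_{x_i}(h)-\E{\cdot}]$ is composed of centered, $\alpha$-mixing (Condition~\ref{Condition9}), $\cH$-valued random variables whose tail and coordinate moments inherit those of $Y_s$ (Condition~\ref{Condition7}) up to a rescaling by $K(0)/\eta_n$; Theorem~\ref{BernsteinHilbert} with $\gamma\ge 1$ therefore applies and yields a pointwise bound of order $A_1\cdot(\text{poly in }\epsilon,|I_n|,\eta_n)\cdot\exp\{-A_2(\epsilon\eta_n|I_n|^{1/N}/(\log|I_n|)^2)^{2/5}\}$. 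Combining this pointwise bound with the union-bound factor $\exp\{C(R_n/\delta)^d\}$ and equating the two exponents forces $\delta\sim|I_n|^{-2/(N(5d+2))}$ (up to logarithmic factors) and, after solving for $\epsilon$, produces the variance rate $R_n^{5d/2}(\log|I_n|)^7/(|I_n|^{2/(N(5d+2))}\eta_n)$. The analogous argument for $\hat{f}_h$ using Corollary~\ref{CorBernsteinImproved} gives a strictly smaller contribution.

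Finally I would control the oscillation of $\hat{g}_h/\hat{f}_h$ between an arbitrary $x\in\cG(R_n)$ and its closest net point. By Conditions~\ref{Condition2}--\ref{Condition3} and $d\le\hnorm{\cdot}$, the summand in $\hat{g}_h$ is Lipschitz in $x$ with constant $(L_K/h)\hnorm{Y_s}$, supported on $\{d(X_s,x)\le h\}\cup\{d(X_s,y)\le h\}$; Condition~\ref{Condition6} handles the analogous oscillation coming from $F_x(h)^{-1}$. By Condition~\ref{Condition8}, $\E{\1{d(X_0,x)\le h}\hnorm{Y_0}}\lesssim(R_n^r+h)F_x(h)$ uniformly on $\cG(R_n)$, and a further application of Corollary~\ref{CorBernsteinImproved} shows that the empirical counterpart concentrates around its mean, so that after dividing by $F_x(h)\ge\eta_n$ the worst-case oscillation across the $\delta$-net is $O(R_n^r/(|I_n|^{2/(N(5d+2))}h))$, matching the second term of the stated rate (the $O(h)$ remainder being absorbed into $O(h^r)$). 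Borel--Cantelli applied to the two summability assumptions then gives the almost-sure conclusion. The hard part is the simultaneous three-way balancing of the net cardinality $(R_n/\delta)^d$, the concentration exponent $(\epsilon\eta_n|I_n|^{1/N})^{2/5}$ coming from Theorem~\ref{BernsteinHilbert}, and the oscillation $(\delta/h)R_n^r$; the unusual exponent $2/(N(5d+2))$ in the rate is precisely what is needed to make these three scales compatible.
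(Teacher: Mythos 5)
Your proposal follows the paper's proof essentially step for step: the same Collomb decomposition, the same $\cO(h^r)$ bias bound via the coordinatewise expansion of $\vartheta_{x,j}$, the same $\delta_n$-net of $\cG(R_n)$ with $\delta_n\sim|I_n|^{-1/N\cdot 2/(5d+2)}$ combined with Theorem~\ref{BernsteinHilbert} (resp.\ Corollary~\ref{CorBernsteinImproved}) at the net points, and the same oscillation control via Conditions~\ref{Condition6} and~\ref{Condition8} followed by Borel--Cantelli. The only quibble is that you invoke Condition~\ref{Condition3}, which is not among the hypotheses of this theorem; the paper instead splits the kernel-difference event into the intersection of the two $h$-balls (where the Lipschitz bound on $[0,1]$ applies) and their symmetric difference (handled via annulus small-ball probabilities), which is in substance the structure you also describe.
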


\begin{proof}[Proof of Theorem \ref{UniformConvHatPsi}]
Before we begin with the proof, we define  $\delta_n \coloneqq |I_n|^{-1/N\cdot 2/(2+5d)}$ and choose a function $V(n)$ which is proportional to 
$$  \frac{ R_n/\delta_n)^{5d/2} (\log |I_n|)^7 }{  \inf_{x\in\cG(R_n) } F_x(h)\, |I_n|^{1/N} } 
$$
and which we will use later. We follow \cite{collomb1977estimation} and consider the difference $\hat{\Psi}_h(x) - \Psi(x)$ on the ball $\cG = \cG(R)$:
\begin{align*}
\hat{\Psi}_h(x) - \Psi(x)  &= (\hat{f}_h(x))^{-1} \Bigg\{	\left( \hat{g}_h(x) - \E{\hat{g}_h(x) } \right) - \Psi(x)\left( \hat{f}_h(x) - \E{\hat{f}_h(x) } \right) \\
&\qquad	\qquad	\qquad	+\left( \E{ \hat{g}_h(x) } - \Psi(x)\E{\hat{f}_h(x)} \right) \Bigg\} .
	\end{align*}
	Thus,
	\begin{align}\begin{split}\label{UniformConvHatPsiEq1}
	\sup_{x\in\cG} \hnorm{ \hat{\Psi}_h(x) - \Psi(x) }
&\le \Bigg\{	\sup_{x\in\cG} \hnorm{ \hat{g}_h(x) - \E{\hat{g}_h(x) } }  +  \sup_{x\in\cG} \hnorm{\Psi(x)} \cdot \sup_{x\in\cG}  \left| \hat{f}_h(x) - \E{\hat{f}_h(x) }  \right|  \\
	&\qquad\qquad + \sup_{x\in\cG}  \hnorm{ \E{ \hat{g}_h(x) } - \Psi(x)\E{\hat{f}_h(x)} } \Bigg\}  \Biggl/ \inf_{x\in\cG} \hat{f}_h(x).
\end{split}\end{align}
The third term in the numerator of \eqref{UniformConvHatPsiEq1} can be bounded by $  \sup_{x\in\cS}  \hnorm{ \E{ \hat{g}_h(x) } - \Psi(x)\E{\hat{f}_h(x)} } = \cO(h^r) $:
\begin{align*}
		\hnorm{ \E{\hat{g}_h(x)-\Psi(x)\hat{f}_h(x) } }^2 &= \sum_{j\in\N}  \E{ (\psi_j(X_s)-\psi_j(x)) \frac{ \Kh{d(X_s,x)} }{F_x(h)} } ^2 = \sum_{j\in\N}  \E{ \vartheta_{x,j}(d(X_s,x)) \frac{ \Kh{d(X_s,x)} }{F_x( h)} } ^2 \\
		&\le 2 \sum_{j\in\N} \E{ \frac{ \Kh{d(X_s,x)} }{F_x( h)} \vartheta'_{x,j}(0) d(X_s,x) }^2 + 2 \sum_{j\in\N} \E{ \frac{ \Kh{d(X_s,x)} }{F_x( h)} L_{x,j} h^r }^2.
\end{align*}
Note that the left-hand side of the last inequality is in $\cO( h^{2r})$ uniformly in $x\in\cS$ because both $\sup_{x\in\cS}\sum_{j\in\N} L_{x,j}^2 <\infty$ and $\sup_{x\in\cS}\sum_{j\in\N} \vartheta'_{x,j}(0)^2<\infty$ and because $\E{ \Kh{d(X_s,x)} / F_x( h)} $ converges uniformly to $M_x$ by Lemma~\ref{UnifConvHatF} and $\sup_{x\in\cS} M_x < \infty$.

The denominator in \eqref{UniformConvHatPsiEq1} can be bounded as
\begin{align}
		\inf_{x\in\cG} \hat{f}_h(x) &\ge \inf_{x\in\cG} \E{ \hat{f}_h(x) } - \sup_{x\in \cG} \left| \hat{f}_h(x) - \E{\hat{f}_h(x)  } \right| \nonumber \\
		&\ge \inf_{x\in\cS} M_x - \sup_{x\in \cS} \left| \E{\hat{f}_h(x)}-M_x \right| - \sup_{x\in\cG} \left| \hat{f}_h(x) - \E{ \hat{f}_h(x)} \right|.\label{UniformConvHatPsiEq2}
\end{align}
By assumption, the infimum on the right-hand side of \eqref{UniformConvHatPsiEq2} is positive and the first supremum converges to zero by Lemma~\ref{UnifConvHatF}. In order to show that the right-hand side of \eqref{UniformConvHatPsiEq2} is positive, it remains to show that the second supremum converges to zero $a.s.$ We demonstrate this implicitly when considering the two remaining terms of the numerator of Equation~\eqref{UniformConvHatPsiEq1}
$$
	\sup_{x\in\cG} \hnorm{ \hat{g}_h(x) - \E{\hat{g}_h(x) } } 
	\text{ and }
	 \sup_{x\in\cG} \hnorm{\Psi(x)}  \cdot \sup_{x\in\cG}  \left| \hat{f}_h(x) - \E{\hat{f}_h(x) }  \right| .
$$
We can bound $\sup_{x\in\cG} \hnorm{\Psi(x)}$ by $\hnorm{\Psi(0)} + L_{\Psi} \nu(\cD)^{r/2} R^r$. In the sequel, we write for simplicity $\norm{\cdot}$ both for $\hnorm{\cdot}$ and $|\cdot|$, so we can treat both cases at the same time. Consider the following generic situation
\begin{align}\label{UniformConvHatPsiEq3}
		\sup_{x\in\cG} \norm{ \frac{1}{|I_n|} \sum_{s\in I_n} \tilde{Y}^{(l)}_s \frac{ \Kh{d(X_s,x)}}{F_x(h)} - \E{\tilde{Y}^{(l)}_s \frac{ \Kh{d(X_s,x)}}{F_x(h)}} },
\end{align}
where $\tilde{Y}^{(l)}_s = Y_{s}$ if $l=1$ and $\tilde{Y}^{(l)}_s = \hnorm{\Psi(0)} + L_{\Psi} \nu(\cD)^{r/2} R^r$ if $l=0$. Next, choose a $\delta_n$-covering of $\cG$ w.r.t.\ the norm $\hnorm{\cdot}$, i.e., there are points $v_1,\ldots,v_m$ such that for all $x\in\cG$ there is a point $v_j$ with the property $d(x,v_j)\le\hnorm{x-v_j}<\delta_n$. The covering number $m\coloneqq N(\cG(R_n),\delta_n,\hnorm{\cdot})$ depends on $\delta_n$. Then we can bound~\eqref{UniformConvHatPsiEq3} as
\begin{align}\label{UniformConvHatPsiEq4}
\begin{split}
		&\max_{1\le j \le m} \norm{ \frac{1}{|I_n|} \sum_{s\in I_n} \tilde{Y}^{(l)}_s \frac{ \Kh{ d(X_s,v_j) }}{F_{v_j} (h)} - \E{\tilde{Y}^{(l)}_s \frac{ \Kh{ d(X_s,v_j) }}{F_{v_j}(h)}} }  \\
		&\quad + \max_{1\le j \le m} \sup_{x\in U_{\delta}(v_j)}  \norm{ \frac{1}{|I_n|} \sum_{s\in I_n} \tilde{Y}^{(l)}_s \left\{ \frac{ \Kh{d(X_s,x)}}{F_{x} (h)} - \frac{ \Kh{ d(X_s,v_j)}}{F_{v_j}(h)} \right\} }  \\
		&\quad  + \max_{1\le j \le m} \sup_{x\in U_{\delta}(v_j)}  \norm{ \E{ \frac{1}{|I_n|} \sum_{s\in I_n} \tilde{Y}^{(l)}_s \left\{ \frac{ \Kh{d(X_s,x)}}{F_{x} (h)} - \frac{ \Kh{ d(X_s,v_j) }}{F_{v_j}(h)} \right\} } }.
		\end{split}
\end{align}
We begin with the first term in \eqref{UniformConvHatPsiEq4} and show that it vanishes $a.s.$ Therefore, we first consider the functional case for the $\tilde{Y}^{(1)}_s=Y_s$. We infer from Theorem~\ref{BernsteinHilbert} and Lemma~\ref{TotallyBoundednessFunctionalCase} that for the choices $\delta=\delta_n$, $R=R_n$ and $h=h_n$ there are generic constants such that
\begin{align}
		&\p\left( \max_{1\le j \le m} \hnorm{ \frac{1}{|I_n|} \sum_{s\in I_n} Y_{s} \frac{ \Kh{d(X_s,v_j)}}{F_{v_j} (h)} - \E{ Y_s \frac{ \Kh{d(X_s,v_j)}}{F_{v_j}(h)}} } 		\ge z\right) \label{UniformConvHatPsiEq4bb} \\
		&\le m\, \max_{1\le j\le m}\, \p\left(  \hnorm{ \frac{1}{|I_n|} \sum_{s\in I_n} Y_s \Kh{d(X_s,v_j)} - \E{ Y_s \Kh{d(X_s,v_j)}} }  		\ge z \inf_{x\in\cG} F_{x} (h) \right) \nonumber \\
		&\le A_1 \exp\left\{ A_2 \left(\frac{R_n}{\delta_n}\right)^d - A_3   \left( \frac{ z \inf_{x\in\cG} F_{x} (h) |I_n|^{1/N} }{(\log |I_n|)^2}\right)^{2/5} \right\} \nonumber \\
		&\qquad\qquad \cdot\left\{ \left( z \inf_{x\in\cG} F_{x} (h) \right)^{-2} +  \left( \frac{ z \inf_{x\in\cG} F_{x} (h) |I_n|^{1/N}}{ (\log |I_n|)^{2}} \right)^{2/5} + \left( z \inf_{x\in\cG} F_{x} (h) \right)^{-4/5} \left(\frac{|I_n|^{1/N}}{(\log |I_n|)^2}\right)^{1/5} \right\} . \nonumber
\end{align}
If we multiply the factor $z$ inside the probability of \eqref{UniformConvHatPsiEq4bb} by $V(n)$, we find that this probability is still summable for a sequence $(n(k):k\in\N)\subseteq \N^N$ which converges to infinity. Hence, it follows from the first Borel-Cantelli Lemma that
\begin{align*}
		&\max_{1\le j \le m} \hnorm{ \frac{1}{|I_n|} \sum_{s\in I_n} Y_{s} \frac{ \Kh{d(X_s,v_j)}}{F_{v_j} (h)} - \E{ Y_s \frac{ \Kh{d(X_s,v_j)}}{F_{v_j}(h)}} } = \cO( V(n)) \quad a.s. \\
		&= \cO\left( \frac{ R_n^{5d/2} (\log |I_n|)^7 }{ \inf_{x\in\cG} F_{x} (h)  |I_n|^{1/N\cdot 2/(5d+2)} } \right) \quad a.s.
\end{align*}
This means in particular that the first summand in \eqref{UniformConvHatPsiEq4} vanishes $a.s.$ in the functional case.

Consider the first term in \eqref{UniformConvHatPsiEq4} in the scalar case $l=0$. Note that $\tilde{Y}^{(0)}_s$ is the same for all $s$. We use the same bound on the covering number as before and obtain with Corollary~\ref{CorBernsteinImproved} generic constants $A_1$, $A_2$ and $A_3$ such that
\begin{align}
		&\p\left(	\max_{1\le j \le m} \left|  \frac{1}{|I_n|} \sum_{s\in I_n} \tilde{Y}^{(0)}_s \frac{ \Kh{d(X_s,v_j)}}{F_{v_j} (h)} - \E{ \tilde{Y}^{(0)}_s \frac{ \Kh{d(X_s,v_j)}}{F_{v_j}(h)}} \right| 		\ge z	\right) \label{UniformConvHatPsiEq4cc} \\
		&\le m\, \max_{1\le j\le m} \p\left(  \left| \frac{1}{|I_n|} \sum_{s\in I_n} \Kh{d(X_s,v_j)} - \E{ \Kh{d(X_s,v_j)}} \right|  		\ge z \inf_{x\in\cG} F_{x} (h) / \tilde{Y}^{(0)}_0 \right) \nonumber \\
		&\le A_1 \exp\left\{ A_2 \left(\frac{R_n}{\delta_n}\right)^d - A_3\, \frac{ z \inf_{x\in\cG} F_{x} (h)  |I_n|^{1/N} ) }{R_n^r ( \log |I_n|)^2 } \right\} .\nonumber 
\end{align}
Arguing similar as before, we infer from Equation~\eqref{UniformConvHatPsiEq4cc} that
\begin{align*}
		&\max_{1\le j \le m} \left|  \frac{1}{|I_n|} \sum_{s\in I_n} \tilde{Y}^{(0)}_s \frac{ \Kh{d(X_s,v_j)}}{F_{v_j} (h)} - \E{ \tilde{Y}^{(0)}_s \frac{ \Kh{d(X_s,v_j)}}{F_{v_j}(h)}} \right| \\
		&= \cO\left(	\frac{ (R_n / \delta_n)^d R_n^r (\log |I_n|)^3 }{ \inf_{x\in\cG} F_{x} (h)  |I_n|^{1/N} } \vee \frac{ R_n^r (\log |I_n|)^4 }{ \inf_{x\in\cG} F_{x} (h)  |I_n|^{1/N} } \right)  \quad a.s.\\
		&= \cO\left( \frac{ R_n^{5d/2} (\log |I_n|)^7 }{ \inf_{x\in\cG} F_{x} (h)  |I_n|^{1/N\cdot 2/(5d+2)} } \right) \quad a.s.
\end{align*}
for a sequence $(n_k:k\in\N)\subseteq\N^N$ which converges to infinity. In particular, the first summand in Equation~\eqref{UniformConvHatPsiEq4} vanishes $a.s.$ in the real case, too. 

Next, we consider the third summand in \eqref{UniformConvHatPsiEq4}, similar considerations apply to the second summand if we use the exponential inequalities from Section~\ref{Section_ExponentialInequalities}, so we do not need to inspect the second summand closer. We use the Lipschitz continuity of the kernel on the interval [0,1] and the uniform Lipschitz continuity of the small ball probability and bound the third summand as
\begin{align}\label{UniformConvHatPsiEq5}
\begin{split}
		&\max_{1\le j \le m} \sup_{x\in U_{\delta}(v_j)}  \E{ \norm{\tilde{Y}^{(l)}_s} \Biggl\{ \left| \frac{ \Kh{d(X_s,x)}}{F_{v_j} (h)} - \frac{ \Kh{d(X_s,v_j)}}{F_{v_j} (h)} \right|\cdot \frac{ F_{v_j}(h)}{F_x(h)} + \frac{\Kh{d(X_s,v_j)}}{ F_{v_j}(h)}\,  \frac{ | F_x(h)-F_{v_j}(h)| }{F_x(h)} \Biggl\} } .
\end{split}
\end{align}
We write $U_{\delta}(y)$ for the $\delta$-neighborhood of $y\in\cS$ w.r.t.\ the metric $d$ throughout the rest of this proof. For the difference in the kernel functions in \eqref{UniformConvHatPsiEq5}, we need to distinguish two cases which are given by the following two inclusions
\begin{align*}
		&\left\{ X_s\in U_h(v_j)\cap U_h(x) \right\} \subseteq \{ X_s\in U_h(v_j) \} \text{ and } \\
		&\qquad\qquad \{X_s\in [U_h(v_j)\setminus U_h(x)] \cup[ U_h(x)\setminus U_h(v_j) ] \} \subseteq \{ X_s \in  U_{h}(v_j)\setminus U_{h-\delta_n}(v_j) \} \cup \{ X_s \in  U_{h}(x)\setminus U_{h-\delta_n}(x) \}.
\end{align*}
Moreover, note that the quotient of the small ball probability functions in Equation~\eqref{UniformConvHatPsiEq5} can be bounded with the help of a fixed reference point in $\cS$, namely 0, as:
$$
		 \frac{ | F_x(h)-F_{v_j}(h)| }{F_x(h)} \le \frac{ F_0(h)}{\inf_{x \in \cG} F_x(h)} L_0 d(x,v_j) \le \frac{ L_0 \delta_n }{\inf_{x \in \cG} F_x(h)}.
$$
Furthermore, we have
$
		F_y(h) / F_x(h) \le 1 + C \delta_n / \inf_{x\in\cG} F_x(h)
$,
whenever $d(x,y)\le \delta_n$, using the Lipschitz continuity of the small ball probability function. Since $\delta_n /\inf_{x\in\cG} F_x(h)$ converges to 0, this implies in particular that the above ratio $F_{v_j}(h)/F_x(h)$ in \eqref{UniformConvHatPsiEq5} is bounded. Thus, we obtain for \eqref{UniformConvHatPsiEq5} modulo a constant the bound 
\begin{align}\label{UniformConvHatPsiEq6}
\begin{split}
&\max_{1\le j \le m}   \mathbb{E} \Biggl[ \norm{\tilde{Y}^{(l)}_s} \frac{\delta_n}{h} \frac{ \1{ X_s \in U_h(v_j)} }{F_{v_j}(h)} \\
&\qquad\qquad\qquad+   \norm{\tilde{Y}^{(l)}_s} \frac{ \1{X_s \in U_{h}(v_j) \setminus U_{h-\delta_n}(v_j) }+ \1{X_s \in U_{h}(x) \setminus U_{h-\delta_n}(x) } }{F_{v_j}(h) } \\
&\qquad\qquad\qquad\qquad\qquad\qquad\qquad\qquad\qquad\qquad  +\norm{\tilde{Y}^{(l)}_s} \frac{ \Kh{ d(X_s,v_j)}}{F_{v_j}(h)} \frac{\delta_n }{\inf_{x\in\cG} F_x(h) } \Biggl].
\end{split}
\end{align}
The first two terms in \eqref{UniformConvHatPsiEq6} are from the difference in the kernel functions, the last one from the difference in the small ball probability functions. We begin with the case $l=0$. Using the uniform convergence result of Lemma~\ref{UnifConvHatF}, we see that the first term in Equation~\eqref{UniformConvHatPsiEq6} is in $\cO( R_n^r \delta_n / h_n) = \cO( R_n^r / (h_n\, |I_n|^{1/N \cdot 2/(5d+2)} ) ) $. 

Similarly, the third term is in $\cO( R_n^r \delta_n / \inf_{x\in\cG} F_x(h)) = \cO( R_n^r /(\inf_{x\in\cG} F_x(h)) |I_n|^{1/N \cdot 2/(5d+2)} ) $. Note that we can bound $R_n^r$ by $R_n^{5d/2} (\log |I_n|)^7$ in the last $\cO$-expression.

For the second term in \eqref{UniformConvHatPsiEq6}, we use the continuity of the quotient of the small ball probability functions w.r.t.\ a fixed reference point to find that this summand is in $\cO( R^r \delta_n / \inf_{x\in\cG} F_x(h))$.

We continue with the case $l=1$ and consider the second term in \eqref{UniformConvHatPsiEq6}. We write $\tilde{\vartheta}_x(u)$ for the conditional expectation function $\E{\norm{Y_s}|d(X_s,x)=u}$ which is assumed to be differentiable in a neighborhood of zero. So we can use a Taylor expansion for the following difference
\begin{align}\begin{split}\label{UniformConvHatPsiEq7}
		\E{ \norm{Y_s} \frac{ \1{X_s \in U_{h}(x) \setminus U_{h-\delta_n}(x) } }{F_{x}(h) } } &= \E{ \left(\tilde{\vartheta}_x(0) + \tilde{\vartheta}'_x(Z_{1,s} ) d(X_s,x) \right)  \frac{ \1{X_s \in U_{h}(x) } }{F_{x}(h) } } \\
		&\quad - \E{ \left(\tilde{\vartheta}_x(0) + \tilde{\vartheta}'_x(Z_{2,s}) d(X_s,x) \right)  \frac{ \1{X_s \in U_{h-\delta_n}(x) } }{F_{x}(h) } } 
\end{split}\end{align}
where the random variables $Z_{1,s}$ and $Z_{2,s}$ are between $x$ and $X_s$. We can give upper bounds on \eqref{UniformConvHatPsiEq7}:
\begin{align*}
		&\tilde{\vartheta}_x(0) \frac{ F_x(h) - F_x(h-\delta_n)}{F_x(h)} + \sup_{u\le h} |\tilde{\vartheta}'_x( u )| h \frac{ F_x(h) + F_x(h-\delta_n)}{F_x(h)} \\
		&\le  C \left( \sup_{x\in\cG} \tilde{\vartheta}_x(0)  \frac{\delta_n}{\inf_{x\in\cG} F_x(h)} + \sup_{x\in\cG} \sup_{u\le h} |\tilde{\vartheta}'_x(u) | h		\right)  \in \cO\left( R^r \frac{ \delta_n  }{\inf_{x\in\cG} F_x(h)} + h \right).
\end{align*}
Similarly, we find that the first term in \eqref{UniformConvHatPsiEq6} is in $\cO( R^r \delta_n / h )$ and that the third term is in $\cO( R^r \delta_n / \inf_{x\in\cG} F_x(h) )$. This proves that \eqref{UniformConvHatPsiEq6} converges to zero as well as the third term in \eqref{UniformConvHatPsiEq4}. 

Consequently, 
\begin{align*}
		&\sup_{x\in\cG} \hnorm{\Psi(x)} \sup_{x\in\cG} \left| \hat{f}_h(x)-f(x)\right| + \sup_{x\in\cG} \hnorm{\hat{g}_h(x)-g(x)} \\
		&=  \cO\left( \frac{ R_n^{5d/2} (\log |I_n|)^{7} }{ \inf_{x\in\cG(R_n) } F_x(h)\,|I_n|^{1/N \cdot 2/(5d+2) } } \right) + \cO\left(\frac{R_n^r}{h \, |I_n|^{1/N \cdot 2/(5d+2) } } \right)
\end{align*}
This completes the proof.
\end{proof}

Next, we give a result for $\cC$-weakly dependent processes
Therefore, we consider the pseudo-norm on $\cC$ defined by
\begin{align}\label{SemiNorm}
		\norm{g}^{\sim} = \sup_{u,v\in \cS, u\neq v } \frac{\left| g(u)-g(v)\right|}{d(u,v)}
\end{align}
for an element $g: \cS\rightarrow \R$ such that $\norm{g}_{\infty}<\infty$. We assume for the next theorem that the kernel function $K$ is zero at 1. Note that we have in this case for the pseudo-norm $\norm{\cdot}^{\sim}$ that $\norm{ K(h^{-1} d(\cdot,x)) }^{\sim} $ is proportional to $h^{-1}$ (from the reverse triangle inequality).

\begin{theorem}[Uniform convergence under weak spatial dependence conditions]\label{UniformConvHatPsiV}
Let Conditions (\ref{Condition1})-(\ref{Condition8}) and (\ref{Condition10}) be satisfied. Let $(n_k:k\in\N)$ be a sequence in $\N^N$ which converges to infinity. Let $R_n$ be a real-valued sequence which has a limit in $(0,\infty]$ and assume that the bandwidth $h=h_n$ converges to zero such that
\begin{align*}
		\frac{R_n^{4d} \, (\log |I_n|)^8 }{|I_n|^{1/N \cdot 1/(4d+1)} \inf_{x\in\cG(R_n)} F_x(h)^2 \, h_n }\rightarrow 0.
\end{align*}
Then
$$
	\sup_{x\in \cG(R_n) } \hnorm{ \hat{\Psi}_h(x) - \Psi(x) }  = \cO\left( \frac{R_n^{4d} \, (\log |I_n|)^8 }{|I_n|^{1/N \cdot 1/(4d+1)} \inf_{x\in\cG(R_n)} F_x(h)^2 \, h_n } \right) + \cO\left( h^r \right) \quad a.s.
$$
\end{theorem}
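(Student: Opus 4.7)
The argument parallels that of Theorem~\ref{UniformConvHatPsi}, replacing each appeal to a mixing exponential inequality by its $\cC$-weak-dependence counterpart from Section~\ref{Section_ExpInequalitiesPhiMixing}. I would first decompose
\begin{align*}
\hat\Psi_h(x)-\Psi(x) = \hat f_h(x)^{-1}\bigl\{(\hat g_h(x)-\E{\hat g_h(x)}) - \Psi(x)(\hat f_h(x)-\E{\hat f_h(x)}) + (\E{\hat g_h(x)}-\Psi(x)\E{\hat f_h(x)})\bigr\},
\end{align*}
and inherit from the proof of Theorem~\ref{UniformConvHatPsi} that the deterministic bias term is $\cO(h^r)$ uniformly in $x\in\cS$ by Condition~\ref{Condition1}, and that the denominator is eventually bounded away from zero by Lemma~\ref{UnifConvHatF}, Condition~\ref{Condition5} and the a.s.\ smallness of $\sup_{\cG(R_n)}|\hat f_h - \E{\hat f_h}|$ established below.

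The core task is then to control $\sup_{x\in\cG(R_n)}\hnorm{\hat g_h(x)-\E{\hat g_h(x)}}$ and $\sup_{x\in\cG(R_n)}|\hat f_h(x)-\E{\hat f_h(x)}|$ almost surely at the advertised rate. I would fix $\delta_n = |I_n|^{-1/(N(4d+1))}$ and a minimal $\delta_n$-net $\{v_1,\dots,v_m\}\subset\cG(R_n)$ with respect to $\hnorm{\cdot}$, so that $\log m\lesssim (R_n/\delta_n)^d$ by Lemma~\ref{TotallyBoundednessFunctionalCase}. Each supremum then splits, exactly as in the proof of Theorem~\ref{UniformConvHatPsi}, into (i) a maximum over the net, (ii) a stochastic oscillation across $\delta_n$-balls, and (iii) its expected counterpart. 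Terms (ii) and (iii) are handled verbatim, using the Lipschitz continuity of $K$ on $[0,1]$ (Conditions~\ref{Condition2}--\ref{Condition3}), the Lipschitz continuity of the small ball probability ratio (Condition~\ref{Condition6}) and the Taylor expansion of $\tilde\vartheta_x$ (Condition~\ref{Condition8}); here $K(1)=0$ is used because, in contrast to Section~\ref{Section_ExponentialInequalities}, we need the kernel to be Lipschitz on all of $\R_+$ to match the pseudo-norm framework of Condition~\ref{Condition10}.

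For the key term (i), I view the test function as $\tilde g(x) = K_h(d(x,v_j))$, so that the reverse triangle inequality for the pseudo-metric $d$ together with the Lipschitz property of $K$ yields $\norm{\tilde g}^{\sim}\le L_K/h$. Condition~\ref{Condition10} then allows me to apply Theorem~\ref{WeaklyCDependentHilbert} in the $\gamma\ge 1$ branch (Condition~\ref{Condition7}) to $S_n(v_j)=\sum_{s\in I_n}Y_sK_h(d(X_s,v_j))$, and Proposition~\ref{WeaklyCDependent} to the scalar sum $\sum_{s\in I_n}K_h(d(X_s,v_j))$ that arises after bounding $\sup_{\cG(R_n)}\hnorm{\Psi(x)}\lesssim R_n^r$. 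Absorbing the small ball factor into the threshold as $zF_{v_j}(h)\ge z\inf_{x\in\cG(R_n)}F_x(h)$ produces an exponential tail of order $\exp\{-A_2(z^2 |I_n|^{1/N} h\,\inf F_x(h)^2)^{1/4}\}$. A union bound over the net introduces a penalty $\exp\{C(R_n/\delta_n)^d\}$; forcing the tail to dominate this penalty (together with the residual-dimension contribution $\exp\{-A_2 m\}$ coming from the finite-dimensional truncation inside Theorem~\ref{WeaklyCDependentHilbert}) is exactly what pins down $\delta_n=|I_n|^{-1/(N(4d+1))}$ and, after absorbing polynomial prefactors into the $(\log|I_n|)^8$ factor and running Borel--Cantelli along $n\to\infty$, yields the stated a.s.\ rate.

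The main obstacle will be this balancing in (i): the Hilbert-valued weak-dependence inequality of Theorem~\ref{WeaklyCDependentHilbert} has effective tail exponent only $(\cdot)^{1/4}$, weaker than the $(\cdot)^{2/5}$ available for $\alpha$-mixing processes, so the exponential tail has to work harder to swallow the covering penalty $(R_n/\delta_n)^d$. This is why the $|I_n|$-exponent degrades from $2/(N(5d+2))$ in Theorem~\ref{UniformConvHatPsi} to the smaller $1/(N(4d+1))$, and why the rate picks up worse powers of $R_n$ and an extra factor of $\inf_{\cG(R_n)}F_x(h)^{-1}$ relative to the mixing case. Once that balance is set, the remaining oscillation bookkeeping and the Borel--Cantelli step are routine transcriptions of the corresponding arguments in the proof of Theorem~\ref{UniformConvHatPsi}.
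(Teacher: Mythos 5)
Your proposal follows essentially the same route as the paper's proof: the Collomb decomposition inherited from Theorem~\ref{UniformConvHatPsi}, a $\delta_n$-net with $\delta_n=|I_n|^{-1/(N(4d+1))}$ and covering bound from Lemma~\ref{TotallyBoundednessFunctionalCase}, the observation that $\norm{K_h(d(\cdot,v_j))}^{\sim}\lesssim h^{-1}$ so that Proposition~\ref{WeaklyCDependent} and Theorem~\ref{WeaklyCDependentHilbert} (with $\gamma\ge 1$) control the net maximum with tail exponent $(z^2|I_n|^{1/N}\inf F_x(h)^2 h)^{1/4}$, and the same treatment of the oscillation terms followed by Borel--Cantelli. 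This matches the paper's argument step for step, including the identification of why the exponent degrades to $1/(4d+1)$.
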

\begin{proof}
The structure of the proof is the same as in Theorem~\ref{UniformConvHatPsi}. We can continue with the decomposition of Collomb from \eqref{UniformConvHatPsiEq1} and it remains to demonstrate that both
\begin{align}\label{EqUniformConvHatPsiV1}
		\sup_{x\in\cG} \hnorm{ \hat{f}_h(x) - \E{\hat{f}_h(x) } }  \rightarrow 0 \;a.s. \text{ and } \sup_{x\in\cB} \hnorm{\Psi(x)} \, \sup_{x\in\cG} \hnorm{ \hat{g}_h(x) - \E{\hat{g}_h(x) } } \rightarrow 0 \;a.s.
\end{align}
with the desired rate. Therefore, we can immediately pass to the first term in \eqref{UniformConvHatPsiEq4}. We merely have to adjust the parameters in the exponential inequalities which are given in Equations~\eqref{UniformConvHatPsiEq4bb} and \eqref{UniformConvHatPsiEq4cc}. The analogue of \eqref{UniformConvHatPsiEq4bb} reads now
\begin{align*}
	&\p\left( \max_{1\le j \le m} \hnorm{ \frac{1}{|I_n|} \sum_{s\in I_n} Y_{s} \frac{ \Kh{d(X_s,v_j)}}{F_{v_j} (h)} - \E{ Y_s \frac{ \Kh{d(X_s,v_j)}}{F_{v_j}(h)}} } 		\ge z\right) \\
	&\le A_1 Q_n \exp\left( A_2 \frac{R_n^d}{\delta_n^d} - A_3 (z^2 |I_n|^{1/N} \inf_{x\in\cG} F_x(h)^2 h )^{1/4}		\right), 
\end{align*}
where we use a $\delta_n$ covering and apply Proposition~\ref{WeaklyCDependentHilbert}. The factor $Q_n$ is negligible. 

The analogue of \eqref{UniformConvHatPsiEq4cc} can be bounded with an application of Proposition~\ref{WeaklyCDependent}
\begin{align*}
		&\p\left( \max_{1\le j\le m} \left| \frac{1}{|I_n|} \sum_{s\in I_n} \Kh{d(X_s,v_j)} - \E{ \Kh{d(X_s,v_j)}} \right|  		\ge z \inf_{x\in\cG} F_{x} (h) / \tilde{Y}^{(0)}_0 \right) \\
		&\le A_1 \exp\left( A_2 \frac{R_n^d}{\delta_n^d} - A_3 \frac{z^2 |I_n|^{1/N} \inf_{x\in\cG} F_x(h)^2 h }{R_n^r}		\right).
\end{align*}

In particular, in both cases $l=0$ and $l=1$
$$
			 \max_{1\le j \le m} \hnorm{ \frac{1}{|I_n|} \sum_{s\in I_n} \tilde{Y}^{(l)}_{s} \frac{ \Kh{d(X_s,v_j)}}{F_{v_j} (h)} - \E{ \tilde{Y}^{(l)}_s \frac{ \Kh{d(X_s,v_j)}}{F_{v_j}(h)}} }  = \cO\left( \frac{ (R(n)/\delta_n)^{4d} (\log|I_n|)^8 }{ |I_n|^{1/N} \inf_{x\in\cG} F_x(h)^2 h } \right)
$$

The analogue of second and the third term in \eqref{UniformConvHatPsiEq4} are of a simpler structure because this time the kernel function is Lipschitz continuous on entire $\R$. So in particular, Equation~\eqref{UniformConvHatPsiEq5} becomes simpler. The analogue of the third term in \eqref{UniformConvHatPsiEq4} is again in
$$
		\cO\left( \frac{R_n^r \delta_n}{h_n}		\right)+\cO\left(	 \frac{R_n^r \delta_n}{ \inf_{x\in\cG} F_{x} (h) }	\right).
$$
We can proceed similar as in the proof of Theorem~\ref{UniformConvHatPsi} and choose $\delta_n = |I_n|^{-1/N\cdot 1/(4d+1)}$. We arrive at the conclusion that both terms in \eqref{EqUniformConvHatPsiV1} converge to zero $a.s.$ at the stated rate.
\end{proof}

We can compare the rates of convergence of the estimate $\hat{\Psi}$ from Theorem~\ref{UniformConvHatPsi} and Theorem~\ref{UniformConvHatPsiV} with the results in \cite{ferraty2004nonparametric}. Here the authors consider the estimator on a compact set $\cK\subseteq\cH$ and assume that the data generating process is a strongly mixing time series with a one-dimensional response variable. The further technical assumptions are quite similar. Therefore, we can compare the two rates in the case where $\cK\subseteq\cG(R)$ and where the lattice process $(X,Y)$ is strongly mixing. We obtain for the estimate $\hat{\Psi}$ which is based on $\cH$-valued spatial response variables a rate of 
$$
	\cO \left( \frac{(\log |I_n|)^7 }{  |I_n|^{1/N \cdot 2/(5d+2)} \inf_{x\in \cK} F_x(h) } \right) + \cO\left( \frac{1}{ (|I_n|^{1/N \cdot 2/(5d+2)} h } \right ) + \cO(h^r)
	$$
	because the radius $R=R_n$ of the set $\cG(R)$ can be chosen as constant. In the special case of time series data $((X_t,Y_t):t=1,\ldots,n)$, where the lattice dimension $N$ is one, the rate simplifies as 
$$
	\cO\left( \frac{ (\log n)^7 }{ n^{2/(5d+2)} \inf_{x\in \cK} F_x(h) } \right) + \cO\left( \frac{1}{ (n^{2/(5d+2)} h } \right )  + \cO(h^r).
$$
	The rate obtained by \cite{ferraty2004nonparametric} is derived under the weaker condition that the one-dimensional response variables only satisfy a moment condition and not an exponential tail condition as in our case for Hilbertian response variables. Their rate is given in terms of a parameter $s$ which characterizes the moment condition, a function which is proportional to our function $\inf_{x\in\cK}  F_x(h)$ and a function $\chi$ which is a bound on the maximum of $\inf_{x\in\cK}  F_x(h)^2$ and the joint small ball probability of $X_t$ and $X_{t'}$, for details see \cite{ferraty2004nonparametric}. The rate is in their case
	$$
			\cO\left( \sqrt{ \frac{\log n}{ n \inf_{x\in\cK}  F_x(h) } } \right) + \cO\left(	\sqrt{ \frac{\log n}{n } \frac{ \chi(h)}{\inf_{x\in \cK} F_x(h)^2} \floor*{\frac{n}{\chi(h)}}^s }	\right) + \cO(h^r).
			$$
Hence, the structure of the rate of convergence is similar to ours, in particular, the third $\cO$-expression is also due to the local approximation of $\Psi(X_t)$ by $\Psi(x)$. It is not unexpected that the rate of the first $\cO$-term is slower in the case of a $\cH$-valued response. 

In the case of a constant radius $R$, we obtain for $\cC$-weakly dependent spatial data a rate of
$$
			\cO\left(		\frac{ (\log |I_n|)^8 }{|I_n|^{1/N\cdot 1/(4d+1)} \inf_{x\in\cK}  F_x(h) ^2 \; h } \right) + \cO(h^r).
$$
Again, this rate is similar to the rate of \cite{ferraty2004nonparametric} (for the special case of time series data). Note that the factor $h$ in the denominator of the first $\cO$-expression is due to the $\norm{\cdot}^\sim$-norm of the scaled kernel function $K_h$. Once more the second $\cO$-expression is due to the local approximation of $\Psi(X_t)$ by $\Psi(x)$.

The dimension of the domain of the functions $\cD$ influences the rate negatively in our case. In the case of functional data as curves, $d=1$ and we have the correction factors $2/7$ resp. $1/5$. If the dimension $d$ is bigger, e.g., if we observe manifolds as functional data, the correction factor is even more pronounced. The reason for this is the increasing number of balls of radius $\delta_n$ which cover the space $\cG(R)$. Furthermore, this covering is w.r.t.\ the norm on the Hilbert space and not w.r.t.\ the pseudo-metric $d$. Note that in the proofs it would be sufficient to use a $\delta_n$-covering w.r.t.\ $d$. However, in order to exploit this, we would have to make further assumptions on $d$. Furthermore, in many applications $d$ is a projection-based pseudo metric. Hence, in a possible extension of the current setting, one could consider the case of a sequence of such pseudo-metrics $d_k$ which tend to the metric induced by $\hnorm{\cdot}$.

To conclude, we shortly discuss the influence of the lattice dimension $N$. We see that the sample $I_n$ does not enter in the denominator with its full size but rather with an effective size, where $|I_n|$ is normalized by the $N$-th root. The technical reason for this behavior is explained in the short remark before Corollary~\ref{CorBernsteinImproved}. It is up to future research whether this factor can be removed under the current assumptions with more sophisticated techniques or whether additional assumptions are necessary.

\appendix
\section{Appendix}
\begin{lemma}[\cite{ibragimov1962some}]\label{IbragimovAlphaMixing}
Let $Z_1,\ldots,Z_n$ be real-valued non-negative random variables each $a.s.$ bounded. Set $\alpha \coloneqq \sup_{s\in \{1,\ldots,n\} } \alpha\left( \sigma( Z_i: i \le k), \sigma( Z_i: i > k) \right)$. Then $ \left| \E{ \prod_{i=1}^n Z_i } - \prod_{i=1}^n \E{Z_i} \right| \le (n-1) \, \alpha\, \prod_{i=1}^n \norm{Z_i}_{\infty}$.
\end{lemma}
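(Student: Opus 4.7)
The plan is a straightforward induction on $n$. For $n=1$ the claim reads $0 \le 0$ and is trivial. For the inductive step, let $W := \prod_{i=2}^n Z_i$; then $W$ is $\sigma(Z_2,\ldots,Z_n)$-measurable and bounded $a.s.$ by $\prod_{i=2}^n \norm{Z_i}_\infty$. I would insert $\pm\,\E{Z_1}\E{W}$ and split
\begin{align*}
 \E{\prod_{i=1}^n Z_i} - \prod_{i=1}^n \E{Z_i}
 &= \Bigl(\E{Z_1 W} - \E{Z_1}\E{W}\Bigr) + \E{Z_1}\Bigl(\E{W} - \prod_{i=2}^n \E{Z_i}\Bigr).
\end{align*}

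The first piece is a covariance between the non-negative bounded $\sigma(Z_1)$-measurable random variable $Z_1$ and the non-negative bounded $\sigma(Z_2,\ldots,Z_n)$-measurable random variable $W$. The key tool is the sharp covariance bound for non-negative bounded random variables: if $X \in [0,a]$ is $\cF$-measurable and $Y \in [0,b]$ is $\cG$-measurable, then $|\mathrm{Cov}(X,Y)| \le a\,b\,\alpha(\cF,\cG)$. This follows from the Fubini representation $\E{XY} = \int_0^a\!\int_0^b \p(X>s,Y>t) \intd{s}\intd{t}$ (and the analogous representation for $\E{X}\E{Y}$) combined with the pointwise estimate $|\p(A\cap B) - \p(A)\p(B)| \le \alpha(\cF,\cG)$ from the definition of the mixing coefficient. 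It is precisely the non-negativity assumption that kills the usual constant $4$ in the bounded-case covariance inequality and produces the clean multiplicative constant $(n-1)$ in the final estimate. Taking $k=1$ in the definition of $\alpha$ gives $\alpha(\sigma(Z_1),\sigma(Z_2,\ldots,Z_n)) \le \alpha$, so the first piece is bounded by $\alpha \prod_{i=1}^n \norm{Z_i}_\infty$.

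The second piece is handled by the inductive hypothesis applied to the $(n-1)$-tuple $(Z_2,\ldots,Z_n)$. The corresponding mixing coefficient is $\sup_{2 \le k \le n-1}\alpha\bigl(\sigma(Z_i: 2\le i\le k),\sigma(Z_i: k<i\le n)\bigr)$, which by monotonicity of $\alpha(\cdot,\cdot)$ in its two $\sigma$-algebra arguments is at most the original $\alpha$. The induction thus yields the bound $(n-2)\,\alpha\,\prod_{i=2}^n\norm{Z_i}_\infty$ for the inner difference, and multiplying by $\E{Z_1}\le\norm{Z_1}_\infty$ contributes $(n-2)\,\alpha\,\prod_{i=1}^n\norm{Z_i}_\infty$.

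Summing the two contributions yields exactly $(n-1)\,\alpha\,\prod_{i=1}^n\norm{Z_i}_\infty$, closing the induction. The only non-trivial ingredient is the sharp, constant-one covariance bound for non-negative bounded random variables; I do not anticipate any serious obstacle beyond verifying that identity carefully.
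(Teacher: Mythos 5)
The paper does not prove this lemma: it is quoted from \cite{ibragimov1962some} in the appendix and used as a black box, so there is no in-paper argument to compare yours against. Your proof is correct and self-contained. The decomposition $\E{Z_1W}-\E{Z_1}\E{W}+\E{Z_1}\bigl(\E{W}-\prod_{i=2}^n\E{Z_i}\bigr)$ with $W=\prod_{i=2}^nZ_i$ is the standard telescoping induction, and the one nontrivial ingredient is exactly as you state it: for $\cF$-measurable $X\in[0,a]$ and $\cG$-measurable $Y\in[0,b]$, the Hoeffding--Fubini representation $\E{XY}=\int_0^a\int_0^b\p(X>s,\,Y>t)\intd{s}\intd{t}$ (and its product counterpart) reduces the covariance to a double integral of $\p(X>s,\,Y>t)-\p(X>s)\p(Y>t)$, each integrand being bounded by $\alpha(\cF,\cG)$ by the very definition of the mixing coefficient; non-negativity is indeed what delivers the constant $1$ per step (a signed variable in $[-a,a]$ would cost a factor $4$) and hence the clean factor $n-1$ after summing the telescope. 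The inductive step is also handled correctly: the mixing coefficient of the truncated tuple $(Z_2,\ldots,Z_n)$ is dominated by $\alpha$ because $\alpha(\cdot,\cdot)$ is monotone under enlarging either $\sigma$-algebra. One cosmetic remark: the supremum in the lemma as printed runs over ``$s\in\{1,\ldots,n\}$'' while the body depends on $k$; this is a typo in the statement, and you have read it in the only sensible way (supremum over $k$).
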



\end{document}